\newcommand{\cf}{{\em cf.}\ }
\newcommand{\ca}{{\mathcal A}}
\newcommand{\cd}{{\mathcal D}}
\newcommand{\ch}{{\mathcal H}}
\newcommand{\cp}{{\mathcal P}}
\newcommand{\ct}{{\mathcal T}}
\newcommand{\cx}{{\mathcal X}}
\newcommand{\per}{\mbox{per}\,}
\newcommand{\Mod}{\mbox{Mod}\,}
\newcommand{\Hom}{\mbox{Hom}\,}
\newcommand{\Z}{{\mathbb Z}}
\newcommand{\ten}{\otimes}
\newcommand{\iso}{\stackrel{_\sim}{\rightarrow}}
\title{The Hall algebra of a spherical object}
\author{Bernhard Keller}
\address{Bernhard Keller\\
Universit\'e Paris Diderot -- Paris 7\\
UFR de Math\'ematiques\\
Institut de Math\'ematiques de Jussieu, UMR 7586 du CNRS \\
Case 7012\\
2, place Jussieu \\
75251 Paris Cedex 05\\
France }
\email{keller@math.jussieu.fr}
\author{Dong Yang}
\address{
Dong Yang\\
Institut de Math\'ematiques de Jussieu, UMR 7586 du CNRS \\
Th\'eorie des groupes\\
Case 7012\\
2 place Jussieu\\
75251 Paris Cedex 05\\
France}
\email{yang@math.jussieu.fr}
\author{Guodong Zhou}
\address{Guodong Zhou\\
Universit\"at zu K\"oln\\
Mathematisches Institut\\
Weyertal 86--90\\
D-50931 K\"oln\\
Germany}
\email{gzhou@mi.uni-koeln.de}
\date{October 5, 2008. Last modified on \today.}
\newtheorem{Thm}{Theorem}[section]
\newtheorem{Prop}[Thm]{Proposition}
\newtheorem{Lem}[Thm]{Lemma}
\newtheorem{Cor}[Thm]{Corollary}
\newtheorem{Examples}[Thm]{Examples}
\newtheorem*{Rem}{Remark}
\newtheorem*{Rems}{Remarks}
\newcommand{\ra}{\rightarrow}
\begin{document}

\begin{abstract} We determine the Hall algebra, in the sense of
To\"en, of the algebraic triangulated category generated by a spherical object.
\end{abstract}

\maketitle

\section{Introduction}\label{S:introduction}

This note is motivated by recent developments in the
categorification of cluster algebras and cluster varieties. Let us
recall the context: To a finite quiver $Q$ without loops and without
$2$-cycles, one can associate the cluster algebra $\ca_Q$ and the
cluster variety $\cx_Q$ (endowed with a Poisson structure), \cf
\cite{FominZelevinsky02} and \cite{FockGoncharov03}.  If $Q$ does
not have oriented cycles, we have at our disposal a very good
categorical model for the combinatorics of the cluster algebra
$\ca_Q$, \cf the surveys \cite{BuanMarsh06} \cite{Reiten06}
\cite{Ringel07} \cite{Keller08c}.  In contrast, for the moment,
there is no corresponding theory for the cluster variety $\cx_Q$.
Ongoing work by Kontsevich-Soibelman \cite{KontsevichSoibelman07},
Bridgeland \cite{Bridgeland08} and others shows that there is a
close link between the quantized version \cite{FockGoncharov03} of
$\cx_Q$ and the Hall algebra \cite{Toen06} of a certain triangulated
$3$-Calabi-Yau category $\ct_Q$ associated with $Q$. The category
$\ct_Q$ can be described as the algebraic triangulated category generated by
the objects in a `generic' collection of $3$-spherical objects whose
extension spaces have dimensions encoded by the quiver $Q$.
Alternatively, it may be described as the derived category of dg
modules with finite-dimensional total homology over the Ginzburg dg
algebra \cite{Ginzburg06} associated with $Q$ and a generic
potential. In this note, we consider the case where $Q$ is reduced
to a single vertex without any arrows. This amounts to considering
the (algebraic) triangulated category $\ct_Q$ generated by a single
spherical object. We first show that this category is indeed
well-determined up to a triangle equivalence
(Theorem~\ref{T:triangulated}).  Then we classify the objects of
$\ct_Q$ (Theorem~\ref{T:classification}, due to P.~J{\o}rgensen
\cite{Joergensen04}), compute the  Hall algebra of $\ct_Q$ (Theorem
~\ref{T:structure}) and establish the link with the cluster variety,
which in this case is just a one-dimensional torus
(section~\ref{S:cluster-variety}). The Hall algebra of the algebraic
triangulated category generated by a spherical object of arbitrary
dimension can be determined similarly. We give the result in
section~\ref{S:arbitrary-dimension}.  For the classification
theorem, we establish more generally the classification of the
indecomposable objects in a triangulated category admitting a
generator whose graded endomorphism algebra is hereditary, a result
which may be useful in other contexts as well.

\section*{Acknowledgments}

The second author gratefully acknowledges a postdoctoral fellowship
gran\-ted by the Universit\'e Pierre et Marie Curie, during which
this work was carried out. The third author benefited from financial
support via postdoctoral fellowships from the Ecole Polytechnique
and from the network `Representation theory of algebras and
algebraic Lie theory'. Both of them would like to thank the first
author and Professor Steffen K\"{o}nig for their support. All three
authors thank Peter J{\o}rgensen for pointing out references
\cite{Joergensen04} and \cite{Joergensen06}.

\section{The triangulated category generated by a
spherical object}
\label{S:triangulated}

Let $k$ be a field and $\ct$ a $k$-linear
algebraic triangulated category (\cf section~3.6 of
\cite{Keller06d} for this terminology). We write
$\Sigma$ for the suspension functor of $\ct$. We assume that
$\ct$ is idempotent complete, i.e. each idempotent
endomorphism of an object of $\ct$ comes from a direct
sum decomposition.

Let $d$ be an integer and $G$ a $d$-spherical object
of $\ct$. This means that the graded endomorphism algebra
\[
B=\bigoplus_{p\in\Z} \ct(G,\Sigma^n G)
\]
is isomorphic to the cohomology of the $d$-sphere, i.e. to
$k\times k$ if $d=0$ and
to $k\langle s\rangle/(s^2)$, where $s$ is of degree $d$, if $d\neq 0$.
We also view $B$ as a dg algebra whose differential vanishes.
We refer to section~3 of \cite{Keller06d} for the definition of the derived
category $\cd(B)$ and the perfect derived category $\per(B)$.
We say that $G$ {\em classically generates} $\ct$ if $\ct$ coincides
with its smallest triangulated subcategory stable under
taking direct factors and containing $G$.

\begin{Thm} \label{T:triangulated} If $G$ classically generates $\ct$,
there is a triangle equivalence from $\ct$ to the perfect
derived category of $B$.
\end{Thm}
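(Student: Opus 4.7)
The plan is to realize $\ct$ as the perfect derived category of the dg endomorphism algebra $A$ of a dg lift of $G$, and then to prove that $A$ is quasi-isomorphic to $B$, i.e., that $B$ is intrinsically formal.

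Since $\ct$ is algebraic, it admits a pretriangulated dg enhancement $\ca$ with $\ct \simeq H^0(\ca)$. Choose a lift $\tilde{G} \in \ca$ of $G$ and form the dg algebra $A = \ca(\tilde{G}, \tilde{G})$; by construction $H^*(A) \cong B$ as graded algebras. A standard result in the theory of algebraic triangulated categories (\cf sections~4 and~7 of \cite{Keller06d}) then asserts that the derived functor $\mbox{RHom}(\tilde{G}, -)$ induces a triangle equivalence between the thick subcategory of $H^0(\ca)$ generated by $G$ and $\per(A)$. Since $G$ classically generates $\ct$, this thick subcategory is all of $\ct$, so we obtain $\ct \iso \per(A)$.

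The remaining task, and the heart of the argument, is to show that $A$ is formal. By Kadeishvili's theorem, $A$ is quasi-isomorphic to a strictly unital minimal $A_\infty$-algebra on the graded space $B$ whose binary product $m_2$ is the multiplication of $B$ and whose higher operations $m_n \colon B^{\otimes n} \ra B$ have degree $2-n$ for $n \geq 3$. When $d=0$, $B$ lies entirely in degree $0$, so any $m_n$ with $n \geq 3$ would land in strictly negative degree and must therefore vanish. When $d \neq 0$, strict unitality forces the only potentially nonzero value of $m_n$ to be $m_n(s,\ldots,s)$, of degree $nd + 2 - n$, which must belong to $\{0,d\}$; this yields $n(d-1) = -2$ or $(n-1)d = n-2$, and a short case analysis rules out integer solutions with $n \geq 3$ and $d \neq 0$. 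Hence $m_n = 0$ for all $n \geq 3$, $A$ is formal, and $\per(A) \simeq \per(B)$, completing the equivalence.

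The main obstacle is the formality step, but, once it is placed in the $A_\infty$-framework, the obstruction reduces to elementary degree bookkeeping that works uniformly in $d$. Steps~1 and~2 are standard applications of general dg-category theory, invoking no more than the algebraicity hypothesis and the classical generation of $\ct$ by $G$.
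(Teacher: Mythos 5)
Your proof is correct and takes essentially the same route as the paper: realize $\ct$ as the perfect derived category of a strictly unital minimal $A_\infty$-model with underlying graded algebra $B$, kill the higher products $m_n$ ($n\geq 3$) by the degree count $nd+2-n\notin\{0,d\}$, and conclude formality. The paper simply compresses your dg-enhancement and Kadeishvili steps into one citation of Lef\`evre-Hasegawa (thm.~7.6.0.6 of \cite{Lefevre03}) and invokes Lemma~4.1.3.8 of \cite{Lefevre03} for the passage from the $A_\infty$-perfect category back to $\per(B)$.
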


\begin{proof} According to theorem~7.6.0.6 of \cite{Lefevre03},
there is a triangle equivalence between $\ct$ and the perfect
derived category of a minimal strictly unital $A_\infty$-algebra
whose underlying graded algebra is $B$. This $A_\infty$-structure
is given by linear maps
\[
m_p: (ks)^{\ten p} \to B
\]
defined for $p\geq 3$ and homogeneous of degree $2-p$. For
degree reasons, these maps vanish. The claim follows because
the perfect derived category of $B$ considered as a dg algebra is equivalent
to  the perfect derived category  of $B$ considered as an $A_\infty$-algebra
by Lemma~4.1.3.8 of \cite{Lefevre03}.
\end{proof}

Thus, it makes sense to speak about `the' algebraic triangulated
category generated by a spherical object of dimension $d$. Notice
that Koszul duality provides us with another realization of this
category: If $d\neq 1$, we have a triangle equivalence
\[
\per(B) \iso \cd_{fd}(A) \;\; ,
\]
where, for $d=0$, we have $A=B$, and for $d\neq 0$, the dg algebra
$A$ is the free dg algebra on a closed generator $t$ of degree
$-d+1$. Here the category $\cd_{fd}(A)$ is the full subcategory of
the derived category $\cd A$ formed by the dg modules whose homology
is of finite total dimension. If $d$ equals $3$, then $A$ is the
Ginzburg algebra \cite{Ginzburg06} associated with the quiver $A_1$.
 If $d$ equals $1$, then $\per(B)$ is triangle equivalent to the full
 subcategory of $\mathcal{D}(k[t])$, where $t$ is of degree $0$, formed by the
  dg modules whose homology is of finite total dimension and annihilated by some power of $t$.

\section{Classification}\label{S:classification}
In this section, we present a general classification theorem for
indecomposable objects in a triangulated category admitting a
`generator' $G$ whose graded endomorphism algebra is hereditary.
We first consider the case where $G$ compactly generates a triangulated
category with arbitrary direct sums. Then we consider the case where
$G$ is a classical generator. We apply it to the perfect and the
finite dimensional derived categories of the Ginzburg algebra of
type $A_1$.

\subsection{Compactly generated case}
Let $k$ be a commutative ring, and $\mathcal{T}$ a $k$-linear triangulated
category with suspension functor $\Sigma$. Assume $\mathcal{T}$ has
arbitrary direct sums. Let $G$ be a {\em compact generator} for
$\mathcal{T}$, i.e. the functor $\mathrm{Hom}_{\mathcal{T}}(G,?)$
commutes with arbitrary direct sums, and given an object $X$ of
$\mathcal{T}$, if $\text{Hom}_{\mathcal{T}}(G, \Sigma^p X)$ vanishes
for all integers $p$, then $X$ vanishes. Let
\[A=\bigoplus_{p\in\mathbb{Z}}\mathrm{Hom}_{\mathcal{T}}(G,\Sigma^{p}G)\]
be the graded endomorphism algebra of $G$. Then for any object $X$
of $\mathcal{T}$, the graded vector space
\[\bigoplus_{p\in\mathbb{Z}}\mathrm{Hom}_{\mathcal{T}}(G,\Sigma^{p}X)\]
has a natural graded (right) module structure over $A$.  We define a
functor $$F:\mathcal{T}\rightarrow \mathrm{Grmod}(A), X\mapsto
\bigoplus_{p\in\mathbb{Z}}\mathrm{Hom}_{\mathcal{T}}(G,\Sigma^{p}X).$$
Notice that since $G$ is a compact generator, a morphism of
$\mathcal{T}$ is invertible if and only if its image under $F$ is
invertible.

We say that $A$ is {\em graded hereditary}, if the category
$\mathrm{Grmod}(A)$ of graded $A$-modules is hereditary, or in other
terms, each subobject of a projective object of $\mathrm{Grmod}(A)$
is projective.

\begin{Thm} With the notations above, suppose that $A$ is graded hereditary.
 The functor $F:\mathcal{T}\rightarrow \mathrm{Grmod}(A)$
 is full, essentially surjective, and its kernel has square zero. In particular, it
 induces a bijection from the set of isoclasses of objects (respectively, of
 indecomposable objects) of $\mathcal{T}$ to that of $\mathrm{Grmod}(A)$.
\end{Thm}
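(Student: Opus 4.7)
The plan is to lift, for each $X \in \ct$, a graded projective resolution of $F(X)$ of length one to a triangle in $\ct$ whose first two terms are coproducts of shifts of $G$, and then to read off all three conclusions from this presentation. The only nontrivial ingredient is the graded hereditary hypothesis, which is exactly what supplies such short resolutions.

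The starting observation is that $F$ is fully faithful on coproducts of shifts of $G$: since $\mathrm{Hom}_{\ct}(\Sigma^n G, Y) = F(Y)_{-n}$ and the $A$-action matches composition in $\ct$, for any $Q = \bigoplus_j \Sigma^{n_j} G$ the map $\mathrm{Hom}_{\ct}(Q, Y) \to \mathrm{Hom}_{\mathrm{Grmod}(A)}(F(Q), F(Y))$ induced by $F$ is a bijection. Using this I would now build the desired triangle. Fix $X$, set $M = F(X)$, and pick a resolution $0 \to P^{-1} \to P^0 \to M \to 0$ in $\mathrm{Grmod}(A)$, of this length because $A$ is graded hereditary. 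Each $P^i$ lifts to a coproduct $Q^i$ of shifts of $G$ with $F(Q^i) = P^i$; the map $P^{-1} \to P^0$ lifts uniquely to $f : Q^{-1} \to Q^0$, and the surjection $P^0 \twoheadrightarrow M$ lifts uniquely to $\beta : Q^0 \to X$. Since $F(\beta f) = 0$, the opening remark forces $\beta f = 0$, so completing $f$ to a triangle and using the triangle axiom produces a map $\alpha : C \to X$ from the cone $C$ that becomes invertible after $F$; as $G$ is a generator, $F$ reflects isomorphisms, and $\alpha$ is an isomorphism. Thus every $X$ sits in a triangle
\[
Q^{-1} \xrightarrow{f} Q^0 \xrightarrow{\beta} X \xrightarrow{\delta} \Sigma Q^{-1}
\]
with $Q^i$ coproducts of shifts of $G$. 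Applying the same construction starting from an arbitrary $M \in \mathrm{Grmod}(A)$ in place of $F(X)$ yields an object with $F$-image $M$, which gives essential surjectivity.

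Granted this triangle, the rest is formal. For fullness, any $\phi : F(X) \to F(Z)$ composes with $P^0 \twoheadrightarrow F(X)$ to a map $P^0 \to F(Z)$, which lifts uniquely to $\gamma : Q^0 \to Z$; since $F(\gamma f) = 0$ implies $\gamma f = 0$ by the opening remark, $\gamma$ factors as $\gamma = \psi \beta$ for some $\psi : X \to Z$, and $F(\psi) = \phi$ by surjectivity of $F(\beta)$. For the square-zero kernel, if $F(\psi) = 0$ then $\psi \beta = 0$, so $\psi$ factors through $\delta$ as $\psi = \eta \delta$ for some $\eta : \Sigma Q^{-1} \to Z$; any further $\psi' : Z \to Z'$ in $\ker F$ then satisfies $F(\psi' \eta) = 0$ on the coproduct $\Sigma Q^{-1}$, so $\psi' \eta = 0$ and hence $\psi' \psi = \psi' \eta \delta = 0$.

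The bijection on isoclasses is a formal consequence. Given $F(f)$ invertible, fullness yields $g$ with $F(g) = F(f)^{-1}$; then $1 - gf$ and $1 - fg$ lie in the square-zero ideal $\ker F$, so $gf$ and $fg$ are invertible and hence so is $f$. Combined with essential surjectivity, this gives the bijection on all isoclasses; and because square-zero ideals of morphisms lift idempotents, $X$ is indecomposable if and only if $F(X)$ is. The only substantial step is the construction of the triangle in the second paragraph --- lifting the projective resolution from $\mathrm{Grmod}(A)$ to $\ct$ --- and the hereditary hypothesis is used exactly there.
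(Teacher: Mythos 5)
Your proof is correct and follows essentially the same strategy as the paper: establish that $F$ is fully faithful on $\mathrm{Add}(\Sigma^p G\mid p\in\mathbb{Z})$, lift a length-one projective resolution of $FX$ (available because $A$ is graded hereditary) to a triangle $Q^{-1}\to Q^0\to X\to \Sigma Q^{-1}$, and read off essential surjectivity, fullness and the square-zero kernel from that presentation. The only difference is cosmetic: you build the triangle uniformly by lifting a chosen resolution and then identifying the cone with $X$ via $F$ reflecting isomorphisms, whereas the paper's Step~2 of Lemma~\ref{L:full} instead chooses $w:G_0\to X$ with $Fw$ surjective and then identifies the third vertex; and your square-zero argument, noting directly that $\psi'\eta=0$ since $\Sigma Q^{-1}$ lies in $\mathrm{Add}(\Sigma^p G)$, shortcuts the extra lift through $G_0'$ used in the paper's Lemma~\ref{L:kernelsquare0}.
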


\begin{Rems}

 a)  Notice that we have an isomorphism of functors $F\circ\Sigma\simeq [1]\circ F$, where $[1]$ denotes the shift functor in $\mathrm{Grmod}(A)$.

b) The functor $F$ is obviously a homological functor. We will use this fact implicitly.

\end{Rems}

The theorem is a consequence of the following lemmas.

For  a class $\mathcal{S}$ of objects of an additive category $\mathcal{A}$
with arbitrary direct sums, we denote by $\mathrm{Add}(\mathcal{S})$ the closure
of $\mathcal{S}$ under taking all direct sums and direct summands.

\begin{Lem}\label{L:equivadd}
 a) The functor $F:\mathcal{T}\rightarrow \mathrm{Grmod}(A) $ induces an equivalence between $\mathrm{Add}(\Sigma^p G|p\in\mathbb{Z})$ and $\mathrm{Add}(A[p]|p\in\mathbb{Z})$.

b) An object $X$ belongs to $\mathrm{Add}(\Sigma^p G|p\in\mathbb{Z})$ if and only if $FX$ belongs to $\mathrm{Add}(A[p]|p\in\mathbb{Z})$.
\end{Lem}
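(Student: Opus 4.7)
The plan is to establish (a) first and then deduce (b) by combining (a) with the fact, already recorded just before the theorem, that $F$ reflects isomorphisms.

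For (a), I will show separately that the restriction of $F$ to $\mathrm{Add}(\Sigma^p G \mid p \in \Z)$ is fully faithful and that its essential image is exactly $\mathrm{Add}(A[p] \mid p \in \Z)$. Full faithfulness starts from the tautological identification $\mathrm{Hom}_\ct(\Sigma^p G, \Sigma^q G) = A^{q-p} = \mathrm{Hom}_{\mathrm{Grmod}(A)}(A[p], A[q])$. The essential point is to propagate this to the $\mathrm{Add}$-closure: since $G$ is compact, $\mathrm{Hom}_\ct(\Sigma^p G, ?)$ commutes with arbitrary direct sums, and in any additive category $\mathrm{Hom}(\bigoplus Y_i, ?)$ is the product of the $\mathrm{Hom}(Y_i, ?)$. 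Combining these yields bijectivity of $F$ on $\mathrm{Hom}$-sets between arbitrary direct sums of shifts of $G$; the passage to direct summands is then a routine idempotent argument. For essential surjectivity onto $\mathrm{Add}(A[p] \mid p \in \Z)$, write any such $M$ as the image of an idempotent $e$ on some $\bigoplus_i A[p_i]$, lift $e$ to an idempotent $\tilde e$ on $\bigoplus_i \Sigma^{p_i} G$ using the fullness just established, and split $\tilde e$ in $\ct$. The last step is legitimate because a triangulated category with countable coproducts is automatically idempotent complete (B\"o\"okstedt--Neeman); the splitting $X$ lies in $\mathrm{Add}(\Sigma^p G \mid p \in \Z)$ and satisfies $FX \cong M$.

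For (b) the ``only if'' direction is part of (a). For the ``if'' direction I need one small enhancement: the argument above actually shows that for \emph{any} $Y \in \mathrm{Add}(\Sigma^p G \mid p \in \Z)$ and \emph{any} $X \in \ct$ (not necessarily itself of this form) the map
\[
F \colon \mathrm{Hom}_\ct(Y, X) \longrightarrow \mathrm{Hom}_{\mathrm{Grmod}(A)}(FY, FX)
\]
is a bijection, because the target $X$ never had to be of a special form in the compactness and idempotent manipulations. Given $FX \in \mathrm{Add}(A[p] \mid p \in \Z)$, pick by (a) an object $Y \in \mathrm{Add}(\Sigma^p G \mid p \in \Z)$ with $FY \cong FX$, lift this isomorphism to a morphism $f \colon Y \to X$ with $Ff$ invertible, and conclude from the reflection of isomorphisms by $F$ that $f$ itself is an isomorphism, so that $X \in \mathrm{Add}(\Sigma^p G \mid p \in \Z)$.

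I do not expect a serious obstacle: the graded-hereditary hypothesis on $A$ plays no role in this lemma, and the argument reduces to combining compactness of $G$, universal properties of direct sums, and the automatic idempotent completeness of $\ct$. The only point requiring some care is to build up full faithfulness in the correct order, namely first between the explicit generators $\Sigma^p G$, then between arbitrary direct sums of such, and only then between their direct summands, invoking the lift-and-split of idempotents at the last stage.
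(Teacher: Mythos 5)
Your proof is correct and takes essentially the same route as the paper's: both establish that $F(G_0,X)\colon\mathrm{Hom}_\ct(G_0,X)\to\mathrm{Hom}_{\mathrm{Grmod}(A)}(FG_0,FX)$ is bijective for $G_0\in\mathrm{Add}(\Sigma^p G)$ and arbitrary $X$, starting from the tautological case $G_0=G$ and propagating through shifts, direct sums (compactness of $G$), and summands; and both deduce (b) by lifting an isomorphism $FG_0\iso FX$ along the full functor and invoking the conservativity of $F$ noted just before the theorem. Your more explicit treatment of essential surjectivity via idempotent splitting and B\"okstedt--Neeman merely fills in what the paper dismisses as ``easy,'' so there is no genuine divergence in method.
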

\begin{proof} a)
 By definition, we have
 $$
 \mathrm{Hom}_{\mathcal{T}}(G,X)=(FX)_0 \cong \mathrm{Hom}_{\mathrm{Grmod}(A)}(A,FX)
 $$
 for any object $X$ in $\mathcal{T}$, and so the map
 $$
 F(G,X): \mathrm{Hom}_{\mathcal{T}}(G, X) \rightarrow \mathrm{Hom}_{\mathrm{Grmod}(A)}(FG, FX)$$
 is bijective. Therefore the map
 $$
 F(G_0,X): \mathrm{Hom}_{\mathcal{T}}(G_0, X) \rightarrow \mathrm{Hom}_{Grmod(A)}(G_0, X)
 $$
 is an isomorphism for any $G_0$ in $\mathrm{Add}(\Sigma^p G|p\in\mathbb{Z})$
 and any $X$ in $\mathcal{T}$. Taking $X$ in  $\mathrm{Add}(\Sigma^p G|p\in\mathbb{Z})$,
 this proves that, considered as a functor from  $\mathrm{Add}(\Sigma^p G|p\in\mathbb{Z})$
 to $\mathrm{Add}(A[p]|p\in\mathbb{Z})$, the functor $F$ is fully faithful.
 Moreover, since $G$ is compact, $F$ commutes with arbitrary
 coproducts. The proof of essential surjectivity is therefore easy.

 b) The necessity of the condition is also an easy consequence of the fact that
 $F$ commutes with arbitrary coproducts.
Let us prove the sufficiency. Let $X$ be an object of $\ct$ such
that there is an isomorphism $f:FG_0\rightarrow FX$ in
$\mathrm{Add}(A[p]|p\in\mathbb{Z})$ for some $G_0\in
\mathrm{Add}(\Sigma^p G|p\in\mathbb{Z})$.  We can lift $f$ to a
morphism $\tilde{f}:G_0\rightarrow X$ in $\mathcal{T}$. As we have
observed above, since $G$ is a compact generator, a morphism of
$\ct$ is invertible iff its image under $F$ is invertible. Since we
have $F\tilde{f}=f$, it follows that $\tilde{f}$ is invertible and
$X$ is isomorphic to $G_0$.
\end{proof}

It is well known that the class of projective objects of $\mathrm{Grmod}(A)$ is exactly $\mathrm{Add}(A[p]|p\in\mathbb{Z})$.

\begin{Lem}\label{L:surj}
 The functor $F$ is essentially surjective.
\end{Lem}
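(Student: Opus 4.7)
The plan is to build a preimage under $F$ of an arbitrary graded $A$-module $M$ by lifting a short projective resolution of $M$ to a triangle in $\ct$, and then reading off $FX$ from the associated long exact sequence. The graded hereditary hypothesis is what makes a two-term construction sufficient.

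First, since $\mathrm{Grmod}(A)$ is hereditary and $\mathrm{Add}(A[p]\,|\,p\in\mathbb{Z})$ is the class of projective graded $A$-modules (as recalled just before the lemma), every $M\in\mathrm{Grmod}(A)$ fits into a short exact sequence
\[
0\longrightarrow P_1\stackrel{\alpha}{\longrightarrow}P_0\longrightarrow M\longrightarrow 0
\]
with $P_0,P_1\in\mathrm{Add}(A[p]\,|\,p\in\mathbb{Z})$. By Lemma~\ref{L:equivadd}~a), there exist objects $G_0,G_1\in\mathrm{Add}(\Sigma^p G\,|\,p\in\mathbb{Z})$ with $FG_i\simeq P_i$ and a morphism $\tilde{\alpha}:G_1\to G_0$ in $\ct$ with $F\tilde{\alpha}$ identified with $\alpha$.

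Next, I complete $\tilde{\alpha}$ to a triangle
\[
G_1\stackrel{\tilde{\alpha}}{\longrightarrow}G_0\longrightarrow X\longrightarrow \Sigma G_1
\]
in $\ct$ and take $X$ as candidate preimage. Applying the homological functor $F$ (using the isomorphism $F\circ\Sigma\simeq [1]\circ F$ from Remark~a)) gives the long exact sequence
\[
F(\Sigma^{-1}X)\to FG_1\stackrel{\alpha}{\to}FG_0\to FX\to (FG_1)[1]\stackrel{\alpha[1]}{\to}(FG_0)[1]
\]
in $\mathrm{Grmod}(A)$. Since $\alpha$ is injective, so is $\alpha[1]$, and the long exact sequence collapses to a short exact sequence
\[
0\longrightarrow FG_1\stackrel{\alpha}{\longrightarrow}FG_0\longrightarrow FX\longrightarrow 0,
\]
whence $FX\simeq \mathrm{coker}(\alpha)\simeq M$.

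The only delicate point is the identification $F\tilde{\alpha}=\alpha$ under the equivalence of Lemma~\ref{L:equivadd}~a); it is just a bookkeeping step, but it is what makes the argument work, since injectivity of $F\tilde{\alpha}$ is what forces the long exact sequence to break into a short exact sequence. Everything else is formal: graded hereditariness reduces the problem to two-term resolutions, Lemma~\ref{L:equivadd}~a) lifts such a resolution to a morphism in $\ct$, and the homological nature of $F$ extracts $M$ as the cokernel.
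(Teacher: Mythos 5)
Your proof is correct and follows essentially the same route as the paper: take a two-term projective resolution of $M$ (available by graded hereditariness), lift the map $P_1\to P_0$ to a morphism $G_1\to G_0$ in $\ct$ via Lemma~\ref{L:equivadd}~a), form the cone $X$, and use injectivity of $\alpha$ and $\alpha[1]$ to truncate the long exact sequence to $0\to FG_1\to FG_0\to FX\to 0$, yielding $FX\cong M$. The "delicate point" you flag (that $F\tilde\alpha$ is identified with $\alpha$) is exactly what Lemma~\ref{L:equivadd}~a) provides, so the argument is complete.
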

\begin{proof}
 Let $M$ be an object in $\mathrm{Grmod}(A)$. We want to find $X$ in $\mathcal{T}$ such that $FX\cong M$.
Since $\mathrm{Grmod}(A)$ is hereditary, there exists a short exact sequence of graded $A$-modules
$$0\rightarrow P_1\stackrel{u}{\rightarrow} P_0\rightarrow M\rightarrow 0$$
with $P_0,P_1\in \mathrm{Add}(A[p]|p\in\mathbb{Z})$.
By Lemma~\ref{L:equivadd} a), there are two objects $G_0,G_1$ in $\mathrm{Add}(\Sigma^p G|p\in\mathbb{Z})$ and $v\in\mathrm{Hom}_{\mathcal{T}}(G_1,G_0)$ such that
\[F(G_1\stackrel{v}{\rightarrow} G_0)\cong (P_1\stackrel{u}{\rightarrow} P_0).\]
Let $X$ be a cone of $v$, i.e. we have a triangle
\[G_1\stackrel{v}{\rightarrow} G_0\stackrel{w}{\rightarrow} X\rightarrow\Sigma G_1 \]
in $\mathcal{T}$. We apply the homological functor $F$ and obtain an exact sequence
\[F(\Sigma^{-1}X)\rightarrow FG_1\stackrel{Fv}{\rightarrow}FG_0\stackrel{Fw}{\rightarrow}FX\rightarrow F(\Sigma G_1)\stackrel{F(\Sigma v)}{\rightarrow} F(\Sigma G_0).\]
Recall that $Fv$ is injective, and so is $F(\Sigma v)=(Fv)[1]$. Therefore $Fw$ is surjective. Then we obtain a short exact sequence in $\mathrm{Grmod}(A)$
\[0\rightarrow P_1\stackrel{u}{\rightarrow} P_0\rightarrow FX\rightarrow 0,\]
and hence $FX\cong M$.
\end{proof}

\begin{Lem}\label{L:full}
 The functor $F$ is full.
\end{Lem}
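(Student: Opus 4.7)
Given objects $X,Y$ of $\ct$ and a morphism $g:FX\to FY$ in $\mathrm{Grmod}(A)$, the goal is to produce $f:X\to Y$ with $Ff=g$. The plan is to first realize $X$ itself as the cone of a morphism between objects of $\mathrm{Add}(\Sigma^p G\,|\,p\in\Z)$, and then use the extension/lifting properties supplied by Lemma~\ref{L:equivadd}~a) together with the triangle axioms.

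First I would construct a ``projective presentation'' of $X$ inside $\ct$. Since $A$ is graded hereditary, there is a short exact sequence
\[
0\to P_1\stackrel{u}{\to} P_0\stackrel{\pi}{\to} FX\to 0
\]
in $\mathrm{Grmod}(A)$ with $P_0,P_1$ in $\mathrm{Add}(A[p]\,|\,p\in\Z)$. By Lemma~\ref{L:equivadd}~b), pick $G_0\in\mathrm{Add}(\Sigma^p G\,|\,p\in\Z)$ with $FG_0\cong P_0$. By Lemma~\ref{L:equivadd}~a) applied to the pair $(G_0,X)$, the surjection $\pi$ lifts to a morphism $w:G_0\to X$ in $\ct$ with $Fw=\pi$. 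Complete $w$ to a triangle
\[
Z\stackrel{v}{\to} G_0\stackrel{w}{\to} X\to \Sigma Z.
\]
Applying the homological functor $F$ and using that $Fw=\pi$ is surjective (so the connecting map $FX\to F(\Sigma Z)$ vanishes), one sees that $Fv:FZ\to FG_0=P_0$ is injective with image $\ker\pi=P_1$. Hence $FZ\cong P_1$ lies in $\mathrm{Add}(A[p]\,|\,p\in\Z)$, so by Lemma~\ref{L:equivadd}~b) we have $Z\in\mathrm{Add}(\Sigma^p G\,|\,p\in\Z)$.

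Now I would lift the composite $g\circ Fw:FG_0\to FY$ to $\tilde h:G_0\to Y$ using that $F(G_0,Y)$ is bijective by Lemma~\ref{L:equivadd}~a). The composition $\tilde h\circ v:Z\to Y$ satisfies $F(\tilde h\circ v)=g\circ Fw\circ Fv=0$ because $Fw\circ Fv=0$ in the triangle. Since $Z\in\mathrm{Add}(\Sigma^p G\,|\,p\in\Z)$, Lemma~\ref{L:equivadd}~a) ensures that $F(Z,Y)$ is injective, hence $\tilde h\circ v=0$. By the standard triangle axiom, there exists $f:X\to Y$ with $f\circ w=\tilde h$. Finally, $Ff\circ Fw=F\tilde h=g\circ Fw$, and since $Fw=\pi$ is an epimorphism of graded $A$-modules, this forces $Ff=g$.

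The main obstacle is Step 1, namely, producing a triangle over the \emph{given} object $X$ (and not merely over some $X'$ with $FX'\cong FX$, as in Lemma~\ref{L:surj}) whose first two terms lie in $\mathrm{Add}(\Sigma^p G\,|\,p\in\Z)$. This requires the hereditary hypothesis in an essential way: without it, the fiber $Z$ of the lifted presentation $w:G_0\to X$ need not have projective image under $F$, and then Lemma~\ref{L:equivadd}~a) would not apply to kill $\tilde h\circ v$. Once the presentation is in hand, the rest is a formal lifting argument through the triangle.
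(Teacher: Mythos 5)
Your argument follows essentially the same two-step strategy as the paper's: first build a ``presentation triangle'' $Z\to G_0\to X\to\Sigma Z$ with $Z,G_0\in\mathrm{Add}(\Sigma^pG\,|\,p\in\Z)$ (the paper's Step~2), and then lift a given $g:FX\to FY$ through $G_0$ and factor it through $X$ via the triangle axioms (the paper packages this as a diagram chase in Step~3; your explicit lifting is the same argument unwound). One small correction: the injectivity of $Fv:FZ\to FG_0$ does not follow from the vanishing of the connecting map $FX\to F(\Sigma Z)$ (that only controls the cokernel end), but from the vanishing of $F(\Sigma^{-1}X)\to FZ$, which holds because $F(\Sigma^{-1}w)=(Fw)[-1]$ is surjective. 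Also, the bijectivity of $F(G_0,-)$ for $G_0\in\mathrm{Add}(\Sigma^pG)$ against an \emph{arbitrary} target is established in the first paragraph of the proof of Lemma~\ref{L:equivadd} rather than in the statement of part~a) itself; the paper isolates this as its Step~1.
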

\begin{proof}
 We prove this in three steps.

{\em Step 1:} By the first paragraph of the proof of Lemma~\ref{L:equivadd},  the map $$F(G,X): \mathrm{Hom}_{\mathcal{T}}(G, X) \rightarrow \mathrm{Hom}_{\mathrm{Grmod}(A)}(G, X)$$  is an isomorphism for any $G_0$ in $\mathrm{Add}(\Sigma^p G|p\in\mathbb{Z})$ and any $X$ in $\mathcal{T}$.

{\em Step 2:} Let $X$ be an object of $\mathcal{T}$. We will show that there exists a triangle
\[G_1\rightarrow G_0\rightarrow X\rightarrow \Sigma G_1\]
in $\mathcal{T}$ such that $G_0,G_1$ belong to $\mathrm{Add}(\Sigma^p G|p\in\mathbb{Z})$.
We choose $w:G_0\rightarrow X$ such that $Fw$ is surjective. We form the triangle
\[Y\rightarrow G_0\stackrel{w}{\rightarrow} X\rightarrow\Sigma Y.\]
We apply $F$ and obtain an exact sequence
\[
\xymatrix@C=0.4cm{F(\Sigma^{-1}G_0) \ar[rr]^-{F(\Sigma^{-1}w)} & &
F(\Sigma^{-1}X) \ar[r] &  FY \ar[r] &  FG_0 \ar[r]^{Fw} &
FX \ar[r] & F(\Sigma Y).}
\]
Both $Fw$ and $F(\Sigma^{-1}w)=(Fw)[-1]$ are surjective, so we obtain
a short exact sequence
\[
0\rightarrow FY\rightarrow FG_0\rightarrow FX\rightarrow 0.
\]
Thus $FY$ belongs to $\mathrm{Add}(A[p]|p\in\mathbb{Z})$ since $\mathrm{Grmod}(A)$ is hereditary. By Lemma~\ref{L:equivadd} b), the object $Y$ belongs to $\mathrm{Add}(\Sigma^p G|p\in\mathbb{Z})$. Now it suffices to take $G_1=Y$.

{\em Step 3:} Let $X,Y$ be objects in $\mathcal{T}$. By Step 2, there is a triangle in $\mathcal{T}$
\[G_1\rightarrow G_0\rightarrow X\rightarrow \Sigma G_1,\]
where $G_0,G_1$ belong to $\mathrm{Add}(\Sigma^p G|p\in\mathbb{Z})$, whose image under $F$ is a short exact sequence in $\mathrm{Grmod}(A)$
\[0\rightarrow FG_1\rightarrow FG_0\rightarrow FX\rightarrow 0.\]
If we apply $\mathrm{Hom}_{\mathcal{T}}(?,Y)$ to the triangle and $\mathrm{Hom}_{A}(?,FY)$ to the short exact sequence, we obtain a commutative
diagram with exact rows
$$\xymatrix@R0.6cm{ {\mathcal{T}}(\Sigma G_1,Y)\ar[r]&
{\mathcal{T}}(X,Y)\ar[r]\ar[d]& {\mathcal{T}}(G_0,Y)\ar[d]\ar[r]&
{\mathcal{T}}(G_1,Y)\ar[d]\\
0\ar[r]& (FX,FY)\ar[r]& (FG_0,FY)\ar[r]& (FG_1,FY) ,
}
$$
where the parentheses $(,)$ in the second row denote the groups of
$A$-linear maps. By Step~1, the rightmost two vertical maps are
isomorphisms. Therefore, the leftmost vertical map is surjective.
Since $X$ and $Y$ are arbitrary, we have proved that $F$ is full.
\end{proof}

\begin{Lem}\label{L:kernelsquare0}
 Let $J=\{f\in\mathrm{Mor}(\mathcal{T})|Ff=0\}$. Then $J^2=0$.
\end{Lem}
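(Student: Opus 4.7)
The plan is to show that if $f:X\to Y$ and $g:Y\to Z$ are morphisms of $\mathcal{T}$ with $Ff=0$ and $Fg=0$, then $g\circ f=0$ in $\mathcal{T}$. The main input is Step~2 of the proof of Lemma~\ref{L:full} applied to the object $X$: it yields a triangle
\[
G_1\rightarrow G_0\stackrel{a}{\rightarrow} X\stackrel{b}{\rightarrow}\Sigma G_1
\]
in which $G_0$, $G_1$, and hence also $\Sigma G_1$, lie in $\mathrm{Add}(\Sigma^p G\mid p\in\mathbb{Z})$.

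First I would consider the composition $f\circ a:G_0\to Y$. Since $Ff=0$, its image under $F$ vanishes. By the first paragraph of the proof of Lemma~\ref{L:equivadd}, the functor $F$ induces a bijection $\mathrm{Hom}_{\mathcal{T}}(G_0,Y)\to\mathrm{Hom}_{\mathrm{Grmod}(A)}(FG_0,FY)$ for every $G_0\in\mathrm{Add}(\Sigma^p G\mid p\in\mathbb{Z})$; hence $f\circ a=0$. Applying $\mathrm{Hom}_{\mathcal{T}}(-,Y)$ to the triangle then gives a long exact sequence in which $f\in\mathrm{Hom}_{\mathcal{T}}(X,Y)$ lies in the image of the map induced by $b$, so $f=\tilde f\circ b$ for some $\tilde f:\Sigma G_1\to Y$.

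Next I would apply the very same argument to the composition $g\circ\tilde f:\Sigma G_1\to Z$: its image $F(g\circ\tilde f)=(Fg)\circ(F\tilde f)$ vanishes because $Fg=0$, and since $\Sigma G_1\in\mathrm{Add}(\Sigma^p G\mid p\in\mathbb{Z})$, the same bijection forces $g\circ\tilde f=0$. Therefore
\[
g\circ f=g\circ\tilde f\circ b=0,
\]
which gives $J^2=0$.

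The only subtle point is the choice of the triangle for $X$: both the source $G_0$ and the shifted cocone $\Sigma G_1$ must lie in $\mathrm{Add}(\Sigma^p G\mid p\in\mathbb{Z})$, so that the faithfulness statement of Lemma~\ref{L:equivadd}(a) can be invoked twice — once to annihilate $f\circ a$ (yielding the factorisation of $f$ through $b$) and once to annihilate $g\circ\tilde f$. This is exactly what Step~2 of the proof of Lemma~\ref{L:full} provides, so no further ingredient is required.
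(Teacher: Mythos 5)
Your proof is correct, and it is genuinely shorter than the paper's. Both arguments begin identically: take a triangle $G_1\to G_0\stackrel{a}{\to} X\stackrel{b}{\to}\Sigma G_1$ with $G_0,G_1\in\mathrm{Add}(\Sigma^p G\mid p\in\mathbb{Z})$, use the bijection $\mathrm{Hom}_{\mathcal{T}}(G_0,Y)\iso\mathrm{Hom}_{\mathrm{Grmod}(A)}(FG_0,FY)$ from the first paragraph of the proof of Lemma~\ref{L:equivadd} to deduce $f\circ a=0$ from $Ff=0$, and factor $f=\tilde f\circ b$. At that point the two proofs diverge. The paper introduces a second triangle $G'_1\to G'_0\stackrel{v'}{\to}Y\stackrel{w'}{\to}\Sigma G'_1$ resolving $Y$, factors $\tilde f$ further as $\tilde f = v'\circ h$ (using the surjectivity of $\mathrm{Hom}(\Sigma G_1, G'_0)\to \mathrm{Hom}(\Sigma G_1, Y)$), factors $g = g'\circ w'$ symmetrically, and then concludes from $w'\circ v' = 0$. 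You instead observe that since $\Sigma G_1\in\mathrm{Add}(\Sigma^p G\mid p\in\mathbb{Z})$ and $F(g\circ\tilde f)=(Fg)\circ(F\tilde f)=0$, the same faithfulness bijection applied to the source $\Sigma G_1$ already forces $g\circ\tilde f=0$, hence $g\circ f=g\circ\tilde f\circ b=0$. This dispenses with the resolution of $Y$ and the composite $w'\circ v'=0$ entirely. What the paper's route buys is an explicit picture of $g\circ f$ as traversing two consecutive steps of a resolution, which is conceptually aligned with the ``square-zero radical'' intuition; what your route buys is economy: one triangle, two applications of the faithfulness of $F$ on $\mathrm{Add}(\Sigma^p G\mid p\in\mathbb{Z})$, and nothing else.
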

\begin{proof}
 Let $f:X\rightarrow Y$ be a morphism in $J$, that is, for any $p\in \mathbb{Z}$ and for any morphism $u: G\ra \Sigma^p X$, we have $\Sigma^p f\circ u=0$.

Let $G_1\stackrel{u}{\rightarrow} G_0\stackrel{v}{\rightarrow}
X\stackrel{w}{\rightarrow}\Sigma G_1$ be a triangle in $\mathcal{T}$
such that $G_0,G_1$ belong to
$\mathrm{Add}(\Sigma^{p}G|p\in\mathbb{Z})$. Since $f$ belongs to
$J$, we have $f\circ v=0$. Therefore, the morphism $f$ factors
through $w$, that is, there is
$f'\in\mathrm{Hom}_{\mathcal{T}}(\Sigma G_1,Y)$ such that $f=f'\circ
w$.

Let $G'_1\stackrel{u'}{\rightarrow} G'_0\stackrel{v'}{\rightarrow}
Y\stackrel{w'}{\rightarrow}\Sigma G'_1$ be a triangle in
$\mathcal{T}$ such that $G'_0,G'_1$ belong to
$\mathrm{Add}(\Sigma^{p}G|p\in\mathbb{Z})$, $Fu'$ is injective and $Fv'$ is surjective.
Then the induced homomorphism
$$
\mathrm{Hom}_{\mathcal{T}}(\Sigma G_1,G_0')\rightarrow
\mathrm{Hom}_{\mathcal{T}}(\Sigma G_1,Y)
$$
is surjective. Therefore, there is $h\in\mathrm{Hom}_{\mathcal{T}}(\Sigma G_1,G_0')$ such that
$f'=v'\circ h$.

Now let $g:Y\rightarrow Z$ be another morphism in $J$. By the arguments in the second paragraph there is $g':\Sigma G'_1\rightarrow Z$ such that $g=g'\circ w'$. Thus we have
 $g\circ f=g'\circ w'\circ v'\circ h\circ w =0$, and we are done.
\end{proof}

\subsection{Classically generated case} Let $k$ be a commutative ring and
let $\mathcal{T}$ be a $k$-linear triangulated category
with suspension functor $\Sigma$. Let $G$ be a {\em classical generator}
for $\mathcal{T}$, i.e. $\mathcal{T}$ is the closure of $G$ under taking
shifts, extensions and direct summands. Let
\[A=\bigoplus_{p\in\mathbb{Z}}\mathrm{Hom}_{\mathcal{T}}(G,\Sigma^{p}G)\]
 be the graded endomorphism algebra of $G$.
We assume that the category $\mathrm{grmod}(A)$ of finitely presented
graded $A$-modules is abelian (i.e. $A$ is graded right coherent) and hereditary.

\begin{Thm}\label{T:classificationclassical}
 The functor
 \[
 F:\mathcal{T}\rightarrow \mathrm{grmod}(A), X\mapsto \bigoplus_{p\in\mathbb{Z}}\mathrm{Hom}_{\mathcal{T}}(G,\Sigma^{p}X)
 \]
 is well-defined, full, essentially surjective, and its kernel has square zero.
 In particular, it induces a bijection from the set of isoclasses of objects
 (respectively, of indecomposable objects) of $\mathcal{T}$ to that of $\mathrm{grmod}(A)$.
\end{Thm}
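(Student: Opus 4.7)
The plan is to follow the strategy of the compactly generated case treated above, systematically replacing $\mathrm{Grmod}(A)$ by $\mathrm{grmod}(A)$ and the closure $\mathrm{Add}(\Sigma^p G\,|\,p\in\mathbb{Z})$ by its finite analogue $\mathrm{add}(\Sigma^p G\,|\,p\in\mathbb{Z})$ consisting of finite direct sums of shifts of $G$ together with their direct summands. Two preliminary points must be established before the previous proof can be copied: first, that $F$ actually takes values in $\mathrm{grmod}(A)$, and second, that $F$ still reflects isomorphisms.

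For well-definedness, I would consider the full subcategory $\ct'\subset\ct$ of objects $X$ with $FX\in\mathrm{grmod}(A)$. It contains $G$ since $FG=A$, is stable under $\Sigma^{\pm 1}$, and is closed under direct summands because $\mathrm{grmod}(A)$ is abelian. Given a triangle $X\to Y\to Z\to\Sigma X$ with $X,Z\in\ct'$, the long exact sequence presents $FY$ as an extension of a subobject of $FZ$ by a quotient of $FX$; the coherence hypothesis, which ensures that $\mathrm{grmod}(A)$ is an abelian subcategory of $\mathrm{Grmod}(A)$ closed under extensions, then forces $FY\in\mathrm{grmod}(A)$. Hence $\ct'$ is a thick subcategory of $\ct$ containing $G$, so equals $\ct$. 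For the zero-detection property, if $FZ=0$ then $\mathrm{Hom}_\ct(\Sigma^p G,Z)=0$ for every $p$, and classical generation by $G$ then yields $\mathrm{Hom}_\ct(Z,Z)=0$, so $Z=0$; the standard cone argument then shows that $F$ reflects isomorphisms.

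Once these points are settled, the analogues of Lemmas~\ref{L:equivadd}, \ref{L:surj}, \ref{L:full} and~\ref{L:kernelsquare0} go through with only cosmetic changes. The analogue of Lemma~\ref{L:equivadd} is the equivalence $\mathrm{add}(\Sigma^p G\,|\,p\in\mathbb{Z})\iso\mathrm{add}(A[p]\,|\,p\in\mathbb{Z})$ induced by $F$, the right-hand side being precisely the class of finitely generated projectives in $\mathrm{grmod}(A)$; only finite direct sums enter, so compactness of $G$ is not needed. For essential surjectivity, I would exploit the combination of heredity and coherence to resolve any $M\in\mathrm{grmod}(A)$ by a short exact sequence $0\to P_1\to P_0\to M\to 0$ with $P_0,P_1\in\mathrm{add}(A[p]\,|\,p\in\mathbb{Z})$, lift $P_1\to P_0$ to a map $G_1\to G_0$ in $\mathrm{add}(\Sigma^p G\,|\,p\in\mathbb{Z})$, and take a cone. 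Fullness proceeds by the three-step argument of Lemma~\ref{L:full}, Step~2 relying on the well-definedness above and on heredity to produce, for each $X$, a triangle $G_1\to G_0\to X\to\Sigma G_1$ with $G_0,G_1\in\mathrm{add}(\Sigma^p G\,|\,p\in\mathbb{Z})$. The kernel-square-zero statement is then a direct transcription of the proof of Lemma~\ref{L:kernelsquare0}.

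I expect the main obstacle to be essentially bookkeeping rather than conceptual: the coherence hypothesis on $A$ must be used at each step to keep the various resolutions inside $\mathrm{add}(\Sigma^p G\,|\,p\in\mathbb{Z})$, and the reflection-of-isomorphisms property, which in the compactly generated case was built into the definition of a compact generator, must be re-established from the classical generation hypothesis as sketched above.
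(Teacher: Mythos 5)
Your proposal matches the paper's proof: both establish well-definedness of $F$ via the thick-subcategory argument (showing the objects $X$ with $FX$ finitely presented are closed under shifts, summands, and extensions, using the abelianness of $\mathrm{grmod}(A)$), and then transfer Lemmas~\ref{L:equivadd}--\ref{L:kernelsquare0} by replacing $\mathrm{Add}$ with $\mathrm{add}$. Your explicit verification that $F$ reflects isomorphisms in the classically generated setting (via the kernel of $F$ being zero on objects, then the cone argument) fills in a point the paper leaves implicit under ``mutatis mutandis,'' but it is the same strategy, not a different one.
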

\begin{proof}
Lemma~\ref{L:equivadd},~\ref{L:surj},~\ref{L:full} and
\ref{L:kernelsquare0} and their proofs are still valid, mutatis
mutandis. For example, we need to replace $\mathrm{Add}$ by
$\mathrm{add}$ in the statement of Lemma~\ref{L:equivadd}, where for
a class $\mathcal{S}$ of objects of an additive category
$\mathcal{A}$, we denote by $\mathrm{add}(\mathcal{S})$ the closure
of $\mathcal{S}$ under taking direct summands and finite direct
sums. It remains to prove that $F$ is well-defined, that is, for any
object $X$ of $\mathcal{T}$, the graded $A$-module $FX$ is indeed
finitely presented.

Let $\mathcal{T}'$ be the full additive subcategory of $\mathcal{T}$
consisting of  those objects $X$ such that $F(X)$ is a finitely
presented $A$-module. Evidently $G$ belongs to $\mathcal{T}'$. Thus,
in order to conclude that $\mathcal{T}'$ equals $\mathcal{T}$, it
suffices to show that $\mathcal{T}'$ is stable under shifts, direct
summands and extensions. The first two points are clear.

Suppose that we have a triangle
\[
Y\stackrel{u}{\rightarrow}Z\stackrel{v}{\rightarrow}X\stackrel{w}{\rightarrow}
\Sigma Y
\]
in $\mathcal{T}$ such that $FY$ and $FX$ are finitely presented.
Then the objects $$F(\Sigma^{-1}X)=(FX)[-1] \text{ and } F(\Sigma
Y)=(FY)[1]$$ are also finitely presented. We apply $F$ to the above
triangle to obtain an exact sequence
\[
\xymatrix{
F(\Sigma^{-1}X) \ar[rr]^{F(\Sigma^{-1}w)} & & F(Y)\ar[r]^{Fu} &
FZ\ar[r]^{Fv} &  FX \ar[r]^{Fw} & F(\Sigma Y). }
\]
Note that all components except possibly $FZ$ are finitely presented.
Since the category $\mathrm{grmod}(A)$ of finitely presented graded
$A$-modules is abelian, the kernel
$\mathrm{ker}Fv=\mathrm{coker}F(\Sigma^{-1}w)$ of $Fv$ and the image
$\mathrm{im}Fv=\mathrm{ker}Fw$ of $Fv$ are also finitely presented.
Consequently $FZ$ is finitely presented and $\mathcal{T}'$ is stable
under extensions. Therefore, the functor $F$ is well-defined.
\end{proof}

\begin{Examples}\label{exe}
a) Let $B$ be a finite dimensional hereditary algebra over a field
$k$. Let $\mathcal{T}=\mathcal{D}^{b}(\mathrm{mod}B)$ be the bounded
derived category of finite dimensional $B$-modules, and let $G$ be
the free $B$-module of rank $1$. Then $A=B$ and the functor
$F:\mathcal{D}^{b}(\mathrm{mod}B)\rightarrow \mathrm{grmod}(B)$
takes $X$ to its total homology $H^{*}X$.

b) Let $R$ be a discrete valuation ring with a uniformizing
parameter $\pi$. Denote $B=R/(\pi^2)$ and $k=R/(\pi)$. Let
$\mathcal{T}=\mathcal{D}^{b}(\mathrm{mod}B)$ be the bounded derived
category of finitely generated $B$-modules, and let $G$ be the
simple module $k$. Then the graded endomorphism algebra $A$ of $G$
in $\ct$ is isomorphic to the graded algebra $k[u]$ with
$\mathrm{deg}(u)=1$. Now Theorem~\ref{T:classificationclassical}
gives a new proof of K{\"u}nzer's classification \cite[Lemma
3.1]{Kuenzer} of the indecomposables of $\ct$ . In fact, using the
notations of K{\"u}nzer, up to isomorphism, the indecomposables are
the complexes $X^{[a,b]}$ and $X^{]-\infty, b]}$, where, for given
integers $a\leq b$, we denote by $X^{[a, b]}$ the complex
$$
\cdots \rightarrow 0\rightarrow \underbrace{B}_{a}
\stackrel{\pi}{\rightarrow} B \stackrel{\pi}{\rightarrow} \cdots
\stackrel{\pi}{\rightarrow} B \stackrel{\pi}{\rightarrow}
\underbrace{B}_b \rightarrow 0\rightarrow \cdots
$$
and by $X^{]-\infty,  b]}$ the complex
$$
\cdots \stackrel{\pi}{\rightarrow} B \stackrel{\pi}{\rightarrow}
B\stackrel{\pi}{\rightarrow} \underbrace{B}_b \rightarrow
0\rightarrow \cdots.
$$

c) Let $\tilde{A}$ be a differential graded  algebra (dg algebra for short) such that the category of finitely presented graded modules over the graded algebra $A=H^*(\tilde{A})$ is abelian and hereditary. Let $\mathcal{T}=\mathrm{per}(\tilde{A})$ be the perfect derived category and let $G$ be the free dg $\tilde{A}$-module of rank $1$. Then the functor $F$ takes $X$ to its total homology viewed as a graded $A$-module.
\end{Examples}

\section{Application of the classification}

Let $k$ be a field. Let $\Gamma$ denote the Ginzburg dg algebra of
type $A_1$ over $k$, i.e. $\Gamma$ is the dg algebra $k[t]$ with
$\mathrm{deg}(t)=-2$ and trivial differential.

Denote by $\mathrm{per}(\Gamma)$ the perfect derived category, i.e. the smallest thick subcategory
of the derived category $\mathcal{D}(\Gamma)$ containing $\Gamma$, and by
  $\mathcal{D}_{fd}(\Gamma)$ the finite dimensional derived category, i.e. the full triangulated subcategory
consisting of the dg $\Gamma$-modules whose homology is of finite
total dimension (\cf \cite{Keller94}). The triangulated category
$\mathcal{D}_{fd}(\Gamma)$ is Hom-finite and $3$-Calabi-Yau (\cf
\cite{Joergensen04} or \cite{Keller08}), classically generated by
the simple dg $\Gamma$-module $S=\Gamma/(t\Gamma)$ concentrated in
degree $0$, which is a spherical object of dimension $3$.

Let $[1]$ denote the shift functor of the category $\mathrm{grmod}(\Gamma)$ of finitely presented graded $\Gamma$-modules. For an integer $p$ and a strictly positive integer $n$, the finite dimensional graded $\Gamma$-module $\Gamma/(t^n\Gamma)[p]$, viewed as an object in $\mathcal{D}_{fd}(\Gamma)$, is indecomposable.

\begin{Thm}[J\o rgensen \cite{Joergensen04}]\label{T:classification}
a) Each indecomposable object in $\mathrm{per}(\Gamma)$ is isomorphic to either
$\Gamma/(t^n\Gamma)[p]$ for some integer $p$ and some strictly positive integer $n$ or $\Gamma[p]$ for some integer $p$.

b) Each indecomposable object in $\mathcal{D}_{fd}(\Gamma)$ is isomorphic to
$\Gamma/(t^n\Gamma)[p]$ for some integer $p$ and some strictly positive integer $n$.

\end{Thm}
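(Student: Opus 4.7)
My plan is to apply Theorem~\ref{T:classificationclassical} to $\ct=\per(\Gamma)$ with classical generator $G=\Gamma$, and then to deduce part (b) from part (a) by restricting to objects of finite-dimensional total homology. The hypotheses of Theorem~\ref{T:classificationclassical} are easy to verify: since the differential of $\Gamma$ vanishes, the graded endomorphism algebra of $\Gamma$ in $\per(\Gamma)$ is
\[
A=\bigoplus_{p\in\Z}\Hom_{\per(\Gamma)}(\Gamma,\Sigma^{p}\Gamma)=H^{*}(\Gamma)=k[t],\qquad \deg(t)=-2,
\]
a graded principal ideal domain. In particular $A$ is noetherian, so $\mathrm{grmod}(A)$ is abelian, and every graded ideal of $A$ is principal and free, so $\mathrm{grmod}(A)$ is hereditary. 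Theorem~\ref{T:classificationclassical} therefore provides a full, essentially surjective functor
\[
F:\per(\Gamma)\ra\mathrm{grmod}(k[t]),\qquad X\mapsto\bigoplus_{p}\Hom_{\per(\Gamma)}(\Gamma,\Sigma^{p}X)
\]
with square-zero kernel, inducing a bijection on isoclasses of indecomposables.

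For part~(a), I invoke the (graded) structure theorem for finitely generated modules over a PID: the indecomposables in $\mathrm{grmod}(k[t])$ are exactly $k[t][p]$ and $k[t]/(t^{n})[p]$ for $p\in\Z$ and $n\geq 1$. Because $\Gamma$ and $\Gamma/(t^{n}\Gamma)$ carry trivial differentials, computing homology directly yields $F(\Gamma)=k[t]$ and $F(\Gamma/(t^{n}\Gamma))=k[t]/(t^{n})$ as graded $A$-modules; combined with the compatibility $F\circ\Sigma\simeq[1]\circ F$, this exhibits $\Gamma[p]$ and $\Gamma/(t^{n}\Gamma)[p]$ as $F$-preimages of all the indecomposables of $\mathrm{grmod}(k[t])$, so the bijection yields the list in~(a).

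For part~(b), I first show $\cd_{fd}(\Gamma)\subseteq\per(\Gamma)$. Since $t$ is a non-zero-divisor, the simple $S=\Gamma/(t\Gamma)$ admits a length-one free resolution over $\Gamma$ and hence lies in $\per(\Gamma)$; as $\per(\Gamma)$ is closed under shifts, extensions and direct summands and $\cd_{fd}(\Gamma)$ is classically generated by $S$, the inclusion follows. It then remains to sift the list from part~(a) by finiteness of total homology: $\Gamma[p]$ has infinite total dimension while $\Gamma/(t^{n}\Gamma)[p]$ has total dimension $n$, so only the latter remain, giving the list in~(b). The only non-routine step in the whole plan is the verification of the hypotheses of Theorem~\ref{T:classificationclassical}; after that, the proof is standard bookkeeping with the PID structure of $k[t]$ and the shift-compatibility of $F$.
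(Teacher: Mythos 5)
Your proof is correct and takes essentially the same route as the paper: both apply Theorem~\ref{T:classificationclassical} with $G=\Gamma$ (the paper invokes it through Example~\ref{exe}~c)), reduce the problem to the well-known classification of indecomposable finitely generated graded $k[t]$-modules, and deduce part~(b) by restricting the list from part~(a) to objects of finite total homology. You simply make explicit the steps the paper compresses into ``readily seen'' and ``well-known''.
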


\begin{proof} It is readily seen that the category $\mathrm{grmod}(A)
$ for $A=H^*(\Gamma)(=\Gamma$ as graded algebras) is abelian and hereditary.
 We are therefore in a particular case of Example~\ref{exe} c). The functor $$F=H^*:\mathrm{per}(\Gamma)\rightarrow \mathrm{grmod}(\Gamma)$$  induces a bijection between the set of isoclasses of indecomposable objects of $\mathrm{per}(\Gamma)$ and that of $\mathrm{grmod}(A)$. Moreover, the full subcategory $\mathcal{D}_{fd}(\Gamma)$ of $\mathrm{per}(\Gamma)$ is sent by $F$ to the full subcategory   of $\mathrm{grmod}(\Gamma)$ consisting of finite dimensional graded $\Gamma$-modules. Now the theorem follows from the classification of indecomposable objects for the latter category, which is well-known.
\end{proof}

\begin{Rem}
It is not hard to check that the Auslander-Reiten quiver
of the perfect derived category has the following shape
\[
{\scriptsize
\begin{xy} 0;<0.35pt,0pt>:<0pt,-0.35pt>::
(0,150) *+{P[4]} ="0",
(0,50) *+{} ="1",
(50,200) *+{S[4]} ="2",
(50,100) *+{P[2]} ="3",
(50,0) *+{} ="4",
(100,150) *+{\circ} ="5",
(100,50) *+{P} ="6",
(150,200) *+{S[2]} ="7",
(150,100) *+{\circ} ="8",
(150,0) *+{} ="9",
(200,150) *+{\circ} ="10",
(200,50) *+{\circ} ="11",
(250,200) *+{S} ="12",
(250,100) *+{\circ} ="13",
(250,0) *+{} ="14",
(300,150) *+{\circ} ="15",
(300,50) *+{\circ} ="16",
(350,200) *+{S[-2]} ="17",
(350,100) *+{\circ} ="18",
(350,0) *+{} ="19",
(400,150) *+{\circ} ="20",
(400,50) *+{P[3]} ="21",
(450,200) *+{S[-4]}="22",
(450,100) *+{P[1]} ="23",
(450,0) *+{S[3]} ="24",
(500,150) *+{P[-1]} ="25",
(500,50) *+{\circ} ="26",
(550,200) *+{} ="27",
(550,100) *+{\circ} ="28",
(550,0) *+{S[1]} ="29",
(600,150) *+{\circ} ="30",
(600,50) *+{\circ} ="31",
(650,200) *+{} ="32",
(650,100) *+{\circ} ="33",
(650,0) *+{S[-1]} ="34",
(700,150) *+{\circ} ="35",
(700,50) *+{\circ} ="36",
(750,200) *+{} ="37",
(750,100) *+{\circ} ="38",
(750,0) *+{S[-3]} ="39",
(800,150) *+{P[2]} ="40",
(800,50) *+{\circ} ="41",
(850,200) *+{S[2]} ="42",
(850,100) *+{P} ="43",
(850,0) *+{S[-5]} ="44",
(900,150) *+{\circ} ="45",
(900,50) *+{P[-2]} ="46",
(950,200) *+{S} ="47",
(950,100) *+{\circ} ="48",
(950,0) *+{} ="49",
"0", {\ar@{.}"2"},
"0", {\ar"3"},
"1", {\ar@{}"3"},
"1", {\ar@{}"4"},
"2", {\ar"5"},
"3", {\ar@{.}"5"},
"3", {\ar"6"},
"4", {\ar@{}"6"},
"5", {\ar"7"},
"5", {\ar"8"},
"6", {\ar@{.}"8"},
"6", {\ar@{.}"9"},
"7", {\ar"10"},
"8", {\ar"10"},
"8", {\ar"11"},
"9", {\ar@{.}"11"},
"10", {\ar"12"},
"10", {\ar"13"},
"11", {\ar"13"},
"11", {\ar@{.}"14"},
"12", {\ar"15"},
"13", {\ar"15"},
"13", {\ar"16"},
"14", {\ar@{.}"16"},
"15", {\ar"17"},
"15", {\ar"18"},
"16", {\ar"18"},
"16", {\ar@{.}"19"},
"17", {\ar"20"},
"18", {\ar"20"},
"18", {\ar@{.}"21"},
"19", {\ar@{.}"21"},
"20", {\ar"22"},
"20", {\ar@{.}"23"},
"21", {\ar"23"},
"21", {\ar@{.}"24"},
"22", {\ar@{.}"25"},
"23", {\ar"25"},
"23", {\ar@{.}"26"},
"24", {\ar"26"},
"25", {\ar@{.}"27"},
"25", {\ar@{.}"28"},
"26", {\ar"28"},
"26", {\ar"29"},
"27", {\ar@{.}"30"},
"28", {\ar"30"},
"28", {\ar"31"},
"29", {\ar"31"},
"30", {\ar@{.}"32"},
"30", {\ar"33"},
"31", {\ar"33"},
"31", {\ar"34"},
"32", {\ar@{.}"35"},
"33", {\ar"35"},
"33", {\ar"36"},
"34", {\ar"36"},
"35", {\ar@{.}"37"},
"35", {\ar"38"},
"36", {\ar"38"},
"36", {\ar"39"},
"37", {\ar@{.}"40"},
"38", {\ar@{.}"40"},
"38", {\ar"41"},
"39", {\ar"41"},
"40", {\ar@{.}"42"},
"40", {\ar"43"},
"41", {\ar@{.}"43"},
"41", {\ar"44"},
"42", {\ar"45"},
"43", {\ar@{.}"45"},
"43", {\ar"46"},
"44", {\ar@{.}"46"},
"45", {\ar"47"},
"45", {\ar"48"},
"46", {\ar@{.}"48"},
"46", {\ar@{.}"49"},
\end{xy}
}
\]
where the picture is periodic as indicated by the labels. The
Auslander-Reiten quiver of $\mathcal{D}_{fd}(\Gamma)$ is the
subquiver consisting of the components containing the simples $S$
and $S[1]$. This latter quiver was first determined by
P.~J\o rgensen in \cite{Joergensen04}; he considerably generalized the
result in \cite{Joergensen06}.
\end{Rem}

\section{The Hall algebra}
In this section, we prove the structure theorem
(Theorem~\ref{T:structure}) for the (derived) Hall algebra of the the
Ginzburg dg algebra of type $A_1$.  We begin with some reminders on
Hall algebras of triangulated categories. We refer to
\cite{Schiffmann06} for an excellent introduction to
non derived Hall algebras.

\bigskip
\subsection{The Hall algebra}
We follow \cite{Toen06} and \cite{XiaoXu06}. Let $\mathbb{Q}$ be the field of rational numbers, $q$ be a prime power and $\mathbb{F}_q$ be the finite field with $q$ elements.
Let $\mathcal{C}$ be a Hom-finite triangulated $\mathbb{F}_q$-category with suspension functor $\Sigma$, such that for all objects $X$ and $Y$ of $\mathcal{C}$, the space of morphisms from $X$ to $\Sigma^{-i}Y$ vanishes for all but finitely many positive integers $i$.

Let $X$, $Y$ and $Z$ be three objects of $\mathcal{C}$. We denote by $\mathrm{Aut}(Y)$ the group of automorphisms of $Y$ and by $[Y,Z]_X$ the set of morphisms from $Y$ to $Z$ with cone isomorphic to $X$.
Following~\cite{Toen06}, we define the Hall number by
\[F_{XY}^Z=\frac{|[Y,Z]_X|}{|\mathrm{Aut}(Y)|}\cdot\frac{\prod_{i>0}|\mathrm{Hom}(Y,\Sigma^{-i}Z)|^{(-1)^i}}{\prod_{i>0}|\mathrm{Hom}(Y,\Sigma^{-i}Y)|^{(-1)^i}},\]
where $|\cdot|$ denotes the cardinality. The {\em Hall algebra} of $\mathcal{C}$ over $\mathbb{Q}$, denoted by $\mathcal{H}(\mathcal{C})$, is the $\mathbb{Q}$-vector space with basis the isoclasses $[X]$ of objects $X$ of $\mathcal{C}$ whose multiplication is given by
\[[X][Y]=\sum_{[Z]}F_{XY}^Z [Z].\] It is shown
in~\cite{Toen06}~\cite{XiaoXu06} that it is an associative algebra
with unit $[0]$. Notice however that the algebra we define here is
opposite to that in~\cite{Toen06}~\cite{XiaoXu06}.

\subsection{The structure theorem}

Let $A$ be the $\mathbb{Q}$-algebra with generators $x_i$ and $y_i$, $i\in\mathbb{Z}$, subject to the following relations:
\begin{eqnarray}
&x_i^2 x_{i-1} - (1+q^{-1})x_i x_{i-1} x_i + q^{-1}x_{i-1}x_i^2&\label{x1}\\
&x_i x_{i-1}^2 - (1+q^{-1})x_{i-1} x_i x_{i-1} + q^{-1} x_{i-1}^2 x_i&\label{x2}\\
&x_i x_j - x_j x_i \qquad\text{ if } |i-j|>1&\label{x3}\\
&y_i x_i - q x_i y_i + \frac{q}{q-1}&\label{ore1}\\
&y_i x_{i+1} - q^{-1} x_{i+1} y_i - \frac{1}{q-1}&\label{ore2}\\
&y_i x_j - x_j y_i  \qquad\text{ if } j\neq i,i+1&\label{ore3}\\
&y_i^2 y_{i-1} - (1+q^{-1})y_i y_{i-1} y_i + q^{-1}y_{i-1}y_i^2&\label{y1}\\
&y_i y_{i-1}^2 - (1+q^{-1})y_{i-1} y_i y_{i-1} + q^{-1} y_{i-1}^2 y_i&\label{y2}\\
&y_i y_j - y_j y_i \qquad\text{ if } |i-j|>1.\label{y3}&
\end{eqnarray}

Let $\Gamma$ be the Ginzburg dg algebra of type $A_1$ over the finite
field $\mathbb{F}_q$, and $\mathcal{D}_{fd}(\Gamma)$ the finite
dimensional derived category with suspension
functor $\Sigma$. Let
$\mathcal{H}=\mathcal{H}(\mathcal{D}_{fd}(\Gamma))$ be the
Hall algebra.

\begin{Thm}\label{T:structure} We have a $\mathbb{Q}$-algebra isomorphism
\[
\phi: A\longrightarrow \mathcal{H},\qquad
x_i\mapsto [\Sigma^{-2i}S], y_i\mapsto [\Sigma^{-2i-1}S],
\]
where we recall that $S=\Gamma/(t\Gamma)$ is the simple dg $\Gamma$-module
concentrated in degree $0$.
\end{Thm}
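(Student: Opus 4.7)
The plan is to show $\phi$ is a well-defined algebra homomorphism by verifying each family of defining relations, and then establish bijectivity by matching PBW-type bases.

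\textbf{Step 1: the relations.} Since $S$ is $3$-spherical, $\mathrm{Ext}^{p}(S,S)$ is $k$ in degrees $0$ and $3$ and vanishes otherwise. A short parity check on shifts shows that, among ordered pairs of generators $[\Sigma^{-n}S]$ and $[\Sigma^{-m}S]$, the only ones admitting a nontrivial $\mathrm{Hom}(\Sigma^{-n}S,\Sigma^{-m+p}S)$ with $p=1$ (i.e.\ a nonsplit cone in the Hall product) are precisely the pairs isolated by the relations: the ``Serre'' pairs $(x_{i},x_{i-1})$ and $(y_{i},y_{i-1})$, and the ``Ore'' pairs $(y_{i},x_{i})$, $(x_{i},y_{i})$, $(y_{i},x_{i+1})$, $(x_{i+1},y_{i})$. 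All other pairs have the commutator vanishing after balancing the Euler-characteristic correction in Tö{e}n's formula, which yields (\ref{x3}), (\ref{y3}) and (\ref{ore3}). For the Ore pairs, I would compute the two Hall products directly: for instance $[y_{i}][x_{i}]$ only receives the split extension (since $\mathrm{Ext}^{1}(\Sigma^{-2i-1}S,\Sigma^{-2i}S)=0$), while $[x_{i}][y_{i}]$ receives both the split extension and the class $[0]$ (coming from nonzero third maps $\Sigma^{-2i}S\to\Sigma\cdot\Sigma^{-2i-1}S$, whose cones vanish); comparing them with the cardinality counts from Tö{e}n's formula produces exactly the scalars in (\ref{ore1}), and likewise for (\ref{ore2}). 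The Serre-type relations (\ref{x1}), (\ref{x2}), (\ref{y1}), (\ref{y2}) reduce to the classical Ringel--Hall Serre identities applied to the subcategory generated under extensions by two ``adjacent'' shifts of $S$, which is equivalent to representations of an $A_{2}$-type configuration.

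\textbf{Step 2: surjectivity.} By Theorem~\ref{T:classification}, the isoclasses of direct sums of $\Gamma/(t^{n}\Gamma)[p]$ form a $\mathbb{Q}$-basis of $\mathcal{H}$. Since $[X][Y]$ always contains the split extension $[X\oplus Y]$ with nonzero coefficient, it suffices to realize each indecomposable class as a polynomial in the $x_{i}, y_{i}$. I would induct on $n$: the short exact sequence $0\to\Gamma/(t^{n-1}\Gamma)[2]\to\Gamma/(t^{n}\Gamma)\to S\to 0$ in $\mathrm{grmod}(\Gamma)$ lifts to a triangle in $\mathcal{D}_{fd}(\Gamma)$ via fullness of $F$, and the Hall product $[S[p]]\cdot[\Gamma/(t^{n-1}\Gamma)[p+2]]$ then contains $[\Gamma/(t^{n}\Gamma)[p]]$ with a known nonzero coefficient, the remaining contributions being covered by the inductive hypothesis.

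\textbf{Step 3: injectivity, and the main obstacle.} One expects a PBW-style basis of $A$. The Serre relations identify the subalgebra $A_{x}=\langle x_{i}\rangle$ (resp.\ $A_{y}=\langle y_{i}\rangle$) with a twisted positive part of the quantum group of type $A_{\infty}$, each carrying a Ringel--Hall PBW basis indexed by isoclasses of direct sums of even-shifted (resp.\ odd-shifted) indecomposables $\Gamma/(t^{n}\Gamma)[p]$. The Ore relations (\ref{ore1})--(\ref{ore3}) allow one to move every $y_{j}$ past every $x_{i}$, so the ordered products $m_{y}\cdot m_{x}$ span $A$. The delicate point is confluence: one must check that these spanning monomials are linearly independent, which is really the heart of the proof. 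I would handle this by a dimension-count argument, grading $A$ by total $x_{i}$- and $y_{i}$-degree so that each graded piece is finite dimensional; Step~2 gives a surjection $\phi$, and the PBW spanning argument gives an upper bound on $\dim A$ in each weight matching the number of isoclasses of $\mathcal{D}_{fd}(\Gamma)$ of that weight, so equality and bijectivity follow. Alternatively, one can verify confluence via the diamond lemma on the overlaps of an Ore relation with a Serre relation.
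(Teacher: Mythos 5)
Your overall architecture matches the paper's: verify the relations, prove surjectivity, then prove injectivity by a PBW-style tensor decomposition $R_x\otimes R_y\ra A$ together with a basis/independence argument for the products $\phi(R_x)\cdot\phi(R_y)$ in $\ch$. The analysis of which pairs of shifts have nontrivial $\operatorname{Ext}$ (the ``Ore'' and ``Serre'' pairs) is correct, and identifying $R_x$ with (a twist of) the Hall algebra of $\vec{A}_\infty$ is essentially the paper's Lemma~\ref{L:isomhalves}.

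Two substantive points. First, your surjectivity induction on $n$ is a genuinely different route from the paper's. The paper takes an arbitrary $M$, truncates at the top degree, and runs a double induction (total homological dimension, then number of indecomposable summands), reducing to $M\cong S^{\oplus m}$ and the identity $[S^{\oplus m}]=\frac{q-1}{q^m-1}[S]^m$. Your version hides the same difficulty in the phrase ``the remaining contributions being covered by the inductive hypothesis'': the Hall product $[S[p]]\cdot[\Gamma/(t^{n-1}\Gamma)[p+2]]$ also produces decomposable cones whose total dimension equals that of $\Gamma/(t^n\Gamma)[p]$, so an induction on $n$ alone does not close; you need a second, well-founded parameter (dimension, or the $\leq_\Delta$ order, or number of summands) to make the reduction terminate. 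The paper spells this out; your sketch does not.

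Second, and more seriously, the injectivity argument as written contains a gap: you propose ``grading $A$ by total $x_i$- and $y_i$-degree,'' but the Ore relations (\ref{ore1}), (\ref{ore2}) are \emph{not} homogeneous for total degree --- they equate a degree-$2$ term with a degree-$0$ constant $\tfrac{q}{q-1}$ or $\tfrac{1}{q-1}$. So $A$ is only \emph{filtered} by total degree, not graded. Your dimension count must therefore be carried out in the associated graded, which in turn requires exhibiting a compatible filtration on $\ch$ (e.g.\ by $\dim H^*$) and checking that $\phi$ is filtered and that the top symbols of the spanning monomials map to the classes $[M\oplus N]$. None of this is automatic. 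The paper avoids the filtration bookkeeping entirely by invoking the degeneration order $\leq_\Delta$ of Jensen--Su--Zimmermann and observing that $[M][N]=F_{MN}^{M\oplus N}[M\oplus N]+(\text{strictly smaller terms})$, which immediately gives linear independence (Lemma~\ref{L:basis}); this is cleaner than either the dimension count or the diamond-lemma check you suggest. Finally, a minor slip: the Ore relations rewrite $yx$ into $xy$ plus a constant, so the normal-form monomials are $m_x\cdot m_y$, not $m_y\cdot m_x$, matching the paper's $\psi\colon R_x\otimes R_y\ra A$, $f(x)\otimes g(y)\mapsto f(x)g(y)$.
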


One checks by a direct computation that $\phi$ is indeed an algebra
homomorphism, i.e. the relations (\ref{x1})--(\ref{y3}) are satisfied
if we replace $x_i$ and $y_i$ by $[\Sigma^{-2i}S]$ and
$[\Sigma^{-2i-1}S]$ respectively. It remains to prove the surjectivity
and the injectivity.


\bigskip
\subsection{Surjectivity of $\phi$}

\begin{Prop}
 The $\mathbb{Q}$-algebra $\mathcal{H}$ is generated by the $[\Sigma^{p}S],p\in\mathbb{Z}$.
\end{Prop}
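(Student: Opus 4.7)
By Theorem~\ref{T:classification}(b), every object of $\mathcal{D}_{fd}(\Gamma)$ is a finite direct sum of indecomposables $M_{n,p}:=\Sigma^p(\Gamma/(t^n\Gamma))$ for $n\geq 1$ and $p\in\mathbb{Z}$, so the classes of such direct sums form a $\mathbb{Q}$-basis of $\mathcal{H}$. It suffices to show that each basis class lies in the subalgebra $A'\subseteq\mathcal{H}$ generated by the elements $[\Sigma^pS]$, $p\in\mathbb{Z}$. The plan is a double induction: first on $n$ to handle the indecomposables, then on the number of indecomposable summands to handle general objects.

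For an indecomposable $M_{n,p}$ with $n\geq 2$, the short exact sequence $0\to(t)/(t^n)\to\Gamma/(t^n)\to S\to 0$ together with the identification $(t)/(t^n)\cong\Sigma^2(\Gamma/(t^{n-1}\Gamma))$ yields a triangle
\[
\Sigma^{p+2}(\Gamma/(t^{n-1}\Gamma))\to M_{n,p}\to\Sigma^pS\to\Sigma^{p+3}(\Gamma/(t^{n-1}\Gamma))
\]
in $\mathcal{D}_{fd}(\Gamma)$. Using the two-term free resolution $\Sigma^2\Gamma\to\Gamma$ of $S$, a direct computation shows $\mathrm{Hom}(\Sigma^pS,\Sigma^{p+3}(\Gamma/(t^{n-1}\Gamma)))\cong k$. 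Therefore the Hall product $[\Sigma^pS]\cdot[M_{n-1,p+2}]$ is a $\mathbb{Q}$-linear combination of exactly two basis classes: the non-split extension $[M_{n,p}]$ (with nonzero coefficient) and the split $[\Sigma^pS\oplus M_{n-1,p+2}]$. The same resolution gives $\mathrm{Hom}(M_{n-1,p+2},\Sigma^{p+1}S)=\mathrm{Hom}(\Gamma/(t^{n-1}\Gamma),\Sigma^{-1}S)=0$, so the opposite Hall product $[M_{n-1,p+2}]\cdot[\Sigma^pS]$ picks up only the split contribution; by the inductive hypothesis $[M_{n-1,p+2}]\in A'$, this places $[\Sigma^pS\oplus M_{n-1,p+2}]$ in $A'$. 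Combining these two products yields $[M_{n,p}]\in A'$.

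To pass to general direct sums $X=\bigoplus_{i=1}^{\ell}M_{n_i,p_i}$, I induct on $\ell$. For $\ell\geq 2$, write $X=M_{n_1,p_1}\oplus X'$; both $[M_{n_1,p_1}]$ (by the previous step) and $[X']$ (by secondary induction) lie in $A'$. Computing $[M_{n_1,p_1}]\cdot[X']$, the split term $[X]$ appears with a nonzero coefficient, and every other class $[Z]$ in the expansion corresponds to a non-split triangle $X'\to Z\to M_{n_1,p_1}\to\Sigma X'$. Such a non-split triangle forces a non-trivial gluing between $M_{n_1,p_1}$ and at least one summand of $X'$, so the Krull-Schmidt decomposition of $Z$ has strictly fewer than $\ell$ indecomposable summands; by the inductive hypothesis $[Z]\in A'$. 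This gives $[X]\in A'$, completing the argument.

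The main technical obstacle is the last step: proving that every non-split middle term $Z$ has strictly fewer indecomposable summands than $\ell$. This relies on Krull-Schmidt in $\mathcal{D}_{fd}(\Gamma)$ (guaranteed by Hom-finiteness) together with the explicit classification of indecomposables, which forces the gluing predicted by the non-split triangle to reduce the summand count. The Ext computations underlying the indecomposable step are straightforward consequences of the two-term projective resolution of $S$ and $\Gamma/(t^n\Gamma)$.
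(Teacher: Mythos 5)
Your strategy is genuinely different from the paper's (which splits $M$ into pieces generated in each degree, forms the triangle $\tau_{\leq -1}(L)\oplus N\to M\to\tau_{\geq 0}(L)\to{}$, and reduces to $[S^{\oplus m}]=\frac{q-1}{q^m-1}[S]^m$), but your final inductive step contains a real error. You claim that if $X'\to Z\to M_{n_1,p_1}\to\Sigma X'$ is a triangle with $Z\not\cong M_{n_1,p_1}\oplus X'$, then $Z$ has strictly fewer indecomposable summands than $\ell$. This is false. Take $M_{n_1,p_1}=\Gamma/(t^3\Gamma)$ and $X'=\Sigma^{-3}(\Gamma/(t^3\Gamma))$, so $\ell=2$. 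Then $H^*(M_{n_1,p_1})$ lives in degrees $\{0,-2,-4\}$ and $H^*(\Sigma X')$ in degrees $\{2,0,-2\}$, and there is a grmod-morphism $H^*(M_{n_1,p_1})\to H^*(\Sigma X')$ sending the degree-$0$ generator to the degree-$0$ element. By fullness of $F=H^*$ (Theorem~\ref{T:classificationclassical}) this lifts to a morphism $f\colon M_{n_1,p_1}\to\Sigma X'$ in $\mathcal{D}_{fd}(\Gamma)$, and a computation of the long exact sequence shows that the corresponding cone $Z$ has $H^*Z\cong k$ in degree $3$ plus $k$ in degree $-4$, i.e.\ $Z\cong\Sigma^{-3}S\oplus\Sigma^{4}S$, which has $2=\ell$ indecomposable summands. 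So $[Z]$ appears in $[M_{n_1,p_1}][X']$ with nonzero coefficient yet is not covered by your inductive hypothesis, which assumed fewer than $\ell$ summands.

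The gap is repairable but requires changing the inductive invariant: when $H^*f\neq 0$ the total homology dimension of $Z$ strictly drops (as in the example above, $2<6$), while when $H^*f=0$ the dimension is preserved but the summand count drops (a genuine gluing occurs in $\mathrm{grmod}(k[t])$). So a lexicographic induction on $(\dim H^*, \text{number of summands})$, or equivalently on $\dim H^* - (\text{number of summands})$, would close the argument. Your first step (indecomposables by induction on $n$, playing the two Hall products $[\Sigma^pS][M_{n-1,p+2}]$ and $[M_{n-1,p+2}][\Sigma^pS]$ against each other) looks sound, modulo verifying the one-line Hom computations, and it is a clean alternative to the paper's reduction to $S^{\oplus m}$; but as written the passage to arbitrary direct sums does not hold.
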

\begin{proof} 




Let $M$ be an object of $\mathcal{D}_{fd}(\Gamma)$.
Suppose $M\cong \bigoplus_{p\leq p_0}M_{p}$, where $p_0$ is an integer, $M_p$ is a dg $\Gamma$-module which has trivial differential and which is generated in degree $p$, and $M_{p_0}$ is nontrivial. Without loss of generality, we may assume $p_0=0$. Write $L=M_{0}$ and $N=\bigoplus_{p<0}M_{p}$.

We have a triangle
\[\tau_{\leq -1}(L) \oplus N\rightarrow M \rightarrow \tau_{\geq 0}(L) \rightarrow \Sigma (\tau_{\leq -1}(L) \oplus N),\]
which gives rise to a short exact sequence
\[0\rightarrow \mathrm{rad}(H^*L) \oplus H^*N\rightarrow H^*M \rightarrow \mathrm{top}(H^*L) \rightarrow 0.\]

Now let $E$ be an extension of $\tau_{\geq 0}(L)$ by $\tau_{\leq -1}(L) \oplus N$, i.e. we have a triangle
\[\tau_{\leq -1}(L) \oplus N\rightarrow E \rightarrow \tau_{\geq 0}(L) \stackrel{f}{\rightarrow} \Sigma (\tau_{\leq -1}(L) \oplus N),\]
which gives rise to a long exact sequence
\[
\xymatrix@R=0.2cm{\mathrm{top}(H^*L)[-1] \ar[rr]^-{(H^*f)[-1]} &  &\mathrm{rad}(H^*L) \oplus H^*N \ar[r] & H^*E\\
& \mbox{} \ar[r] &\mathrm{top}(H^*L) \ar[r]^-{H^*f} & \mathrm{rad}(H^*L)[1] \oplus (H^*N)[1].
}
\]
If $H^*f\neq 0$, then the dimensions of $H^* E$ and $H^* M$ are equal.
If $H^*f=0$, then the dimensions are equal but $H^*E$ has more
indecomposable direct summands than $H^*M$, and hence by
Theorem~\ref{T:classificationclassical}, the object $E$ has more
indecomposable direct summands than $M$; moreover, the object $E$
has precisely the same number of indecomposable direct summands as
$M$ if and only if $E\cong M$, since this holds if we replace $E$
and $M$ by $H^*E$ and $H^*M$ respectively. By induction the proof
reduces to the case where $H^*M$ is concentrated in one degree,
namely, the case where $M$ is isomorphic to the direct sum of $m$
copies of $S$ for some positive integer $m$. But we have
\[
[S^{\oplus m}]=\frac{q-1}{q^m-1}[S]^m,
\]
which finishes the proof.
\end{proof}

As a consequence,  we have
\begin{Cor}
 The algebra homomorphism $\phi:A\rightarrow \mathcal{H}$ is surjective.
\end{Cor}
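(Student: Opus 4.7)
The plan is to exploit the Proposition that has just been proved, which asserts that the isoclasses $[\Sigma^p S]$ for $p \in \mathbb{Z}$ generate $\mathcal{H}$ as a $\mathbb{Q}$-algebra. Granting this, it suffices to check that every element of this generating set lies in the image of $\phi$.

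For that, I would simply read off the parity decomposition built into the definition of $\phi$: the map sends $x_i \mapsto [\Sigma^{-2i}S]$ and $y_i \mapsto [\Sigma^{-2i-1}S]$. As $i$ runs through $\mathbb{Z}$, the exponent $-2i$ takes every even integer value while $-2i-1$ takes every odd integer value, so $\{-2i : i \in \mathbb{Z}\} \cup \{-2i-1 : i \in \mathbb{Z}\} = \mathbb{Z}$. Hence the image of $\phi$ contains every $[\Sigma^p S]$, and combined with the Proposition this gives $\phi(A) = \mathcal{H}$.

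I do not anticipate any serious obstacle; the content of the surjectivity statement has already been packaged into the Proposition, and the Corollary is a one-line bookkeeping deduction matching the generating set of $\mathcal{H}$ with the generators $x_i$ and $y_i$ of $A$. The genuinely harder half of the Structure Theorem will be injectivity, which will require separate input (a PBW-type basis or explicit relations on the $A$ side), but that is not part of the present statement.
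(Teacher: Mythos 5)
Your argument is correct and is exactly the deduction the paper intends: the Corollary is stated as an immediate consequence of the Proposition, and your parity bookkeeping (evens from the $x_i$, odds from the $y_i$) simply makes explicit the one-line observation the paper leaves implicit.
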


\bigskip
\subsection{Injectivity of $\phi$}
Let $R_x$ (respectively, $R_y$) be the subalgebra of $A$ generated by
$\{x_i|i\in\mathbb{Z}\}$ (respectively, by $\{y_i|i\in\mathbb{Z}\}$).
The image of $R_x$ under $\phi$ is the subalgebra of $\mathcal{H}$
generated by $\{[\Sigma^{2i}S]|i\in\mathbb{Z}\}$, denoted by
$\mathcal{H}_x$, which has a $\mathbb{Q}$-basis
$\{[M]|M\in\mathcal{D}_{fd}(\Gamma),H^{odd}M=0\}$, where $H^{odd}M$ is
the direct sum of homology spaces of $M$ in odd degrees. Similarly,
the image $\mathcal{H}_y$ of $R_y$ under $\phi$ is the subalgebra of
$\mathcal{H}$ generated by $\{[\Sigma^{2i+1}S]|i\in\mathbb{Z}\}$, and
has a $\mathbb{Q}$-basis
$\{[M]|M\in\mathcal{D}_{fd}(\Gamma),H^{even}M=0\}$, where $H^{even}M$
is the direct sum of homology spaces of $M$ in even degrees.

Thanks to (\ref{ore1})(\ref{ore2})(\ref{ore3}), we have an isomorphism
of $\mathbb{Q}$-vector spaces
\[
\psi:R_x\otimes R_y\rightarrow A,\qquad f(x)\otimes g(y)\mapsto f(x)g(y).
\]
In particular, the product of a basis of $R_x$ and a basis of $R_y$ is
a basis of $A$.  Now the injectivity is implied by the following two
lemmas.

\begin{Lem}\label{L:isomhalves}
The restriction $\phi|_{R_x}:R_x\rightarrow \mathcal{H}_x$
(respectively, $\phi|_{R_y}:R_y\rightarrow \mathcal{H}_y$) is an
isomorphism.
\end{Lem}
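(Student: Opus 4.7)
Since $\ch_x$ is defined as the image of $\phi|_{R_x}$, surjectivity is automatic; only injectivity requires proof, and the case of $\phi|_{R_y}$ is entirely parallel. The plan is to realize $\ch_x$ as a (twisted) Ringel--Hall algebra of a concrete hereditary abelian category and then invoke Ringel's theorem.

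First, I observe that the full additive subcategory $\ce_x$ of $\cd_{fd}(\Gamma)$ consisting of those $M$ with $H^{\mathrm{odd}}M=0$ is closed under extensions: the long exact sequence of any triangle forces the vanishing of $H^{\mathrm{odd}}$ in the middle term when it vanishes at the two endpoints. Hence $\ch_x$ is genuinely a Hall subalgebra, supported on the basis vectors $[M]$ with $M\in\ce_x$. The functor $F=H^*$ of Theorem~\ref{T:classificationclassical} identifies $\ce_x$, up to the square-zero kernel $J$, with the full subcategory $\ca_x\subset\mathrm{grmod}(\Gamma)$ of finite-dimensional graded modules concentrated in even degrees. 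A direct calculation from the $3$-sphericity of $S$ shows $\mathrm{Ext}^1_{\ca_x}(S_i,S_j)=k$ when $j=i-1$ and $0$ otherwise, where $S_i:=F(\Sigma^{-2i}S)$; hence $\ca_x$ is equivalent to the category of finite-dimensional nilpotent representations of the linearly oriented $A_\infty$ quiver.

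Next, I compare the derived Hall numbers $F_{XY}^Z$ on $\ce_x$ with Ringel's classical Hall numbers on $\ca_x$. Via $F$ and the long exact sequence of the defining triangle, $|[Y,Z]_X|/|\mathrm{Aut}(Y)|$ is the ordinary count of short exact sequences $0\to Y\to Z\to X\to 0$ in $\ca_x$ (which is itself hereditary) divided by the automorphism group of $Y$. To control the correction factors $\prod_{i>0}|\mathrm{Hom}(Y,\Sigma^{-i}Z)|^{(-1)^i}$, I will use the $3$-Calabi--Yau duality $\mathrm{Hom}(U,V)\cong D\,\mathrm{Hom}(V,\Sigma^{3}U)$ in $\cd_{fd}(\Gamma)$ together with the sparsity of $\mathrm{Hom}$-spaces between shifts of $S$, and check that these factors collapse to a power of $q$ involving only the Euler form of $FY$ and $FX$ in $\ca_x$. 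The upshot is that $\ch_x$ is the twisted Ringel--Hall algebra of $\ca_x$ in the sense of Green.

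Finally, Ringel's theorem applied to the nilpotently oriented $A_\infty$ quiver presents this twisted Ringel--Hall algebra, in terms of the classes $[S_i]$ of the simples, by exactly the quantum Serre relations~(\ref{x1}),~(\ref{x2}),~(\ref{x3}). Since these are the defining relations of $R_x$, the assignment $x_i\mapsto [\Sigma^{-2i}S]=\phi(x_i)$ extends to the desired two-sided inverse of $\phi|_{R_x}$. I expect the main technical obstacle to be the comparison step: tracking the $3$-Calabi--Yau correction factors carefully enough to recognize them as the standard Euler-form twist appearing in Green's formulation of the Hall multiplication.
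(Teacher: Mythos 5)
Your strategy coincides with the paper's: both identify $\mathcal{H}_x$ with the Hall algebra of finite-dimensional representations of the quiver $\vec{A}_\infty$, via an equivalence between the full subcategory of $\mathcal{D}_{fd}(\Gamma)$ consisting of objects with homology in even degrees and $\mathrm{mod}\,\vec{A}_\infty$ (the paper realizes this through $n\mapsto\mathrm{Hom}(\Sigma^{-2n}\Gamma,-)$, you through $H^{\mathrm{even}}$ on graded modules — the same thing), and then invoke the known presentation of that Hall algebra by quantum Serre relations. The paper treats the compatibility of the derived Hall multiplication on $\mathcal{H}_x$ with the Ringel--Hall multiplication on $\mathcal{H}(\vec{A}_\infty)$ as well-known; your plan to track the Calabi--Yau correction factors is the honest way to verify it, and the key identification of $|[Y,Z]_X|/|\mathrm{Aut}\,Y|$ with the classical subobject count does hold, because applying $H^{\mathrm{even}}$ to a triangle $Y\to Z\to X\to\Sigma Y$ with all three terms even produces a genuine short exact sequence (the connecting maps have odd-degree targets, hence vanish).
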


\begin{Lem}\label{L:basis} The set $\{[M][N]| M,N\in\mathcal{D}_{fd}(\Gamma),
H^{odd}M=H^{even}N=0\}$ is a $\mathbb{Q}$-basis of
$\mathcal{H}$.
\end{Lem}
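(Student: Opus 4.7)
My plan is to establish a PBW-style triangular decomposition of $\mathcal{H}$, using the canonical even/odd splitting of each object.

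First, by Theorem~\ref{T:classification}(b), every indecomposable of $\mathcal{D}_{fd}(\Gamma)$ has homology concentrated in a single parity, so each object $Z$ admits a unique decomposition $Z = Z^{ev} \oplus Z^{od}$ with $H^{odd}(Z^{ev}) = 0$ and $H^{even}(Z^{od}) = 0$. This produces a bijection between isoclasses of $\mathcal{D}_{fd}(\Gamma)$ and the pairs $([M],[N])$ of the statement; in particular, the proposed set and the canonical basis $\{[Z]\}$ of $\mathcal{H}$ are indexed by the same set, so the former has the correct cardinality to be a basis.

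For spanning I would induct on $d(Z) := \dim_k H^*(Z)$. Setting $M = Z^{ev}$, $N = Z^{od}$, expand
\[
[M][N] = \sum_{[W]} F_{M,N}^W\,[W],
\]
indexed by isoclasses of middle terms of triangles $N \to W \to M \stackrel{g}{\to} \Sigma N$. The split triangle ($g=0$) contributes $W \cong Z$ with a nonzero coefficient, and the long exact sequence of $F = H^*$ yields
\[
d(W) \;=\; d(Z) \;-\; 2\cdot\mathrm{rank}(H^*g).
\]
When $H^*g \neq 0$, we have $d(W) < d(Z)$, so $[W]$ already lies in the span of the proposed set by the inductive hypothesis; solving for $[Z]$ puts it there as well. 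The same triangular structure gives linear independence: ordered by $d$, the matrix expressing $\{[M][N]\}$ in the basis $\{[Z]\}$ is triangular with nonzero diagonal entries, hence invertible.

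The main subtlety is to exclude non-split triangles with $g\neq 0$ yet $H^*g = 0$, i.e., $g \in J(M,\Sigma N) := \ker F(M,\Sigma N)$ (cf.\ Lemma~\ref{L:kernelsquare0}). Under our parity constraint both $M$ and $\Sigma N$ have homology in even degrees; I would verify $J(M,\Sigma N) = 0$ in this case using the classification of indecomposables together with the $3$-Calabi-Yau duality $\mathrm{Hom}(A,B) \cong D\mathrm{Hom}(B, \Sigma^3 A)$. Concretely, morphisms not detected by $H^*$ arise from the spherical duality shifted by $3$, which flips parity, and so cannot occur between objects of the same parity. With this in hand the induction closes and the lemma follows.
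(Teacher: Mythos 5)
Your approach is correct but genuinely different from the paper's, and one step is left more sketched than proved. The paper orders isoclasses by the Jensen--Su--Zimmermann partial order $\leq_\Delta$ ($[Y]\leq_\Delta[X]$ iff there is a triangle $X\to Y\oplus Z\to Z\to\Sigma X$), extends it to a total order, and uses the fact that any non-split extension of $M$ by $N$ is strictly $\leq_\Delta$-smaller than $M\oplus N$. This gives strict triangularity directly, with no need to analyse ghost morphisms or split off the $H^*g=0$ case. You instead filter by the coarser numerical invariant $d(Z)=\dim_k H^*(Z)$, which only gives $d(W)\leq d(Z)$ for a cone $W$, and you correctly identify the resulting gap: a connecting map $g\neq 0$ with $H^*g=0$ would produce $W$ with $d(W)=d(Z)$, potentially spoiling triangularity. (One small imprecision: $d$ is not a total order, so the matrix you get is only block-triangular in $d$; what makes the argument work is that, once the gap is closed, each diagonal $d$-block is itself diagonal, since $W\cong Z$ is the unique cone with $d(W)=d(Z)$.)

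Your proposed fix --- that $J(M,\Sigma N)=0$ when both have even-degree homology, via $3$-Calabi--Yau duality flipping parity --- is true, but as written it is a heuristic, not a proof; to make it rigorous you would compute $\mathrm{Hom}(\Gamma/(t^m\Gamma)[a],\Gamma/(t^n\Gamma)[b])$ and check that $F$ is faithful precisely when $a\equiv b\pmod 2$. There is, however, a cleaner route that closes the gap without any duality: you never actually need $g=0$, only $W\cong Z$. If $H^*g=0$, the long exact sequence gives a short exact sequence $0\to H^*N\to H^*W\to H^*M\to 0$ in $\mathrm{grmod}(\Gamma)$; since $\deg t=-2$, this category splits into its even- and odd-degree parts, so $\mathrm{Ext}^1(H^*M,H^*N)=0$, the sequence splits, $H^*W\cong H^*Z$, and Theorem~\ref{T:classificationclassical} gives $W\cong Z$. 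With that observation your $d$-filtration argument is complete. Both your proof and the paper's rest on the same underlying bijection $[Z]\leftrightarrow([Z^{ev}],[Z^{od}])$; the $\leq_\Delta$ order is the more robust tool (it would work without the parity miracle), while your $d$-filtration is more elementary once the splitting lemma is in hand.
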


\begin{proof}[Proof of Lemma~\ref{L:isomhalves}:]

Let $\cp$ be the path category of the quiver $\vec{A}_\infty$
whose vertices are the integers and which has one arrow
from $n$ to $n+1$ for each integer $n$. We have a fully
faithful functor $F$ from $\cp$ to $\per\Gamma$ taking
the object $n$ to the dg module $P[-2n]$. Let $F^*$
be the functor $\cd\Gamma\to \Mod\cp$ taking an object
$X$ to the module $n \mapsto \Hom(Fn, X)$. It induces
an equivalence from the full subcategory whose objects
are the dg modules $M$ whose homology is finite-dimensional
and vanishes in odd degrees to the category $\mod \cp$ of
finite-dimensional representations of the quiver $\vec{A}_\infty$.
It is clear that this functor induces a bijection between
from the basis of $\ch_x$ to that of the Hall algebra of $\mod \cp$,
i.e. the Hall algebra $\ch(\vec{A}_\infty)$ of the quiver $\vec{A}_\infty$.
The claim now follows since it is well-known that the composition
\[
R_x \to \ch_x \to \ch(\vec{A}_\infty)
\]
is an algebra isomorphism.
\end{proof}

\begin{proof}[Proof of Lemma~\ref{L:basis}:]

By the surjectivity of $\phi$, the set of products
\[
\{[M][N]| M,N\in\mathcal{D}_{fd}(\Gamma), H^{odd}M=H^{even}N=0\}
\]
generates the $\mathbb{Q}$-vector space $\mathcal{H}$.
It remains to prove that these products are linearly independent.

Following~\cite{JensenSuZimmermann05}, we define a partial order
$\leq_{\Delta}$ on the set of isoclasses of objects in
$\mathcal{D}_{fd}(\Gamma)$ as follows: if $X$ and $Y$ are two objects
of $\mathcal{D}_{fd}(\Gamma)$, then $[Y]\leq_{\Delta} [X]$ if there
exists an object $Z$ of $\mathcal{D}_{fd}(\Gamma)$ and a triangle in
$\mathcal{D}_{fd}(\Gamma)$:
\[X\rightarrow Y\oplus Z\rightarrow Z\rightarrow \Sigma X.\]
We extend the partial order $\leq_{\Delta}$ to a total order $\preceq$.

Now suppose $(M_1,N_1),\ldots,(M_r,N_r)$ are pairwise distinct pairs
of objects of $\mathcal{D}_{fd}(\Gamma)$ such that
$$H^{odd}M_1=\ldots=H^{odd}M_r=H^{even}N_1=\ldots=H^{even}N_r=0.$$
Suppose that
$\lambda_1,\ldots,\lambda_r$ are rational numbers such that
\[
\lambda_1 [M_1][N_1]+\ldots+\lambda_r [M_r][N_r]=0.
\]
By the assumption on the $M_i$'s and $N_i$'s, there is a unique
maximal element among all $[M_i\oplus N_i]$'s, say $[M_1\oplus N_1]$.
Then we have
\[
\lambda_1 [M_1][N_1]+\ldots+\lambda_r [M_r][N_r]=\lambda_1
F_{M_1N_1}^{M_1\oplus N_1} [M_1\oplus N_1] + \text{smaller terms},
\]
since a nontrivial extension of two objects is always smaller than the
direct sum of them.  The (derived) Hall number $F_{M_1N_1}^{M_1\oplus
N_1}$ is a nonzero rational number. Therefore $\lambda_1$ has to be
zero. An induction on $r$ shows that $\lambda_1=\ldots=\lambda_r=0$.
\end{proof}

\section{From the Hall algebra to the torus}\label{S:cluster-variety}

Let $v=\sqrt{q}$. We tensor $A$ with $\mathbb{Q}(v)$ over
$\mathbb{Q}$, and still denote the resulting algebra by $A$.  Let
$I$ be the ideal of $A$ generated by the space $[A,A]$ of commutators
of $A$.

\begin{Lem}
  The assignment $\varphi: x_i\mapsto \frac{v}{v^2-1}x,
  y_i\mapsto\frac{v}{v^2-1}x^{-1}$ defines an algebra homomorphism
  from $A$ to $\mathbb{Q}(v)[x,x^{-1}]$ with kernel $I$.
\end{Lem}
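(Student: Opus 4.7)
The plan is to first verify that $\varphi$ is a well-defined algebra homomorphism, then to analyze the abelianization $A/I$ and show that the induced map $\bar\varphi: A/I \to \mathbb{Q}(v)[x,x^{-1}]$ is an isomorphism, which forces $\ker\varphi = I$.

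Checking that $\varphi$ respects each of the nine relation families (\ref{x1})--(\ref{y3}) is routine. Since $\mathbb{Q}(v)[x,x^{-1}]$ is commutative, the commutator-type relations (\ref{x3}), (\ref{y3}), (\ref{ore3}) are automatic; the Serre-type relations (\ref{x1}), (\ref{x2}), (\ref{y1}), (\ref{y2}) collapse to the scalar $1-(1+q^{-1})+q^{-1}=0$ after commuting the factors; and the two Ore relations (\ref{ore1}) and (\ref{ore2}) both reduce, after setting $c_0 = v/(v^2-1)$, to the single identity $c_0^2 = q/(q-1)^2$, which is immediate.

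Because the target is commutative, $I \subseteq \ker\varphi$, so $\varphi$ factors through $\bar\varphi:A/I \to \mathbb{Q}(v)[x,x^{-1}]$. The key step is to pin down $A/I$. Writing $\bar x_i, \bar y_i$ for the images of the generators, relation (\ref{ore1}) becomes $\bar x_i \bar y_i = c := q/(q-1)^2$, while relation (\ref{ore2}) becomes $\bar x_{i+1}\bar y_i = c$. The first identity, together with $c \in \mathbb{Q}(v)^{\times}$, shows that $\bar y_i$ is a unit in $A/I$ with inverse $c^{-1}\bar x_i$; combining with the second identity gives $\bar x_{i+1} = \bar x_i$. By induction on $|i|$ all $\bar x_i$ coincide with a single element $\alpha := \bar x_0$, and consequently all $\bar y_i$ coincide with $c\alpha^{-1}$. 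The remaining relations hold trivially in any abelian quotient.

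Hence $\alpha \mapsto \bar x_0$ defines a surjective $\mathbb{Q}(v)$-algebra homomorphism $\mathbb{Q}(v)[\alpha,\alpha^{-1}] \twoheadrightarrow A/I$, and its composition with $\bar\varphi$ is the map $\alpha \mapsto c_0 x$, which is an isomorphism $\mathbb{Q}(v)[\alpha,\alpha^{-1}] \iso \mathbb{Q}(v)[x,x^{-1}]$. A surjection whose composition with a further morphism is an isomorphism must itself be an isomorphism, so both maps are isomorphisms; in particular $\bar\varphi$ is injective, giving $\ker\varphi = I$. The only substantive step is the collapse $\bar x_{i+1} = \bar x_i$ in $A/I$: recognizing that one must read the Ore relations `diagonally' and `off-diagonally' and exploit the invertibility of $\bar y_i$. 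Everything else is either a one-line computation or purely formal.
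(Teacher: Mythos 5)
Your proof is correct and follows essentially the same route as the paper: both identify $A/I$ explicitly and observe that the given assignment gives an isomorphism $A/I\iso\mathbb{Q}(v)[x,x^{-1}]$. The paper states the presentation of $A/I$ and the isomorphism without elaboration, while you supply the missing details — deriving $\bar x_i\bar y_i=\bar x_{i+1}\bar y_i=q/(q-1)^2$, using invertibility of $\bar y_i$ to collapse all $\bar x_i$ to a single unit $\alpha$, and then concluding via the surjection $\mathbb{Q}(v)[\alpha,\alpha^{-1}]\twoheadrightarrow A/I$ whose composition with $\bar\varphi$ is visibly an isomorphism.
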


\begin{proof} We have
\begin{eqnarray*} A/I &\cong&
\mathbb{Q}(v)[x_i,y_i]_{i\in\mathbb{Z}}/(x_i
y_i=x_{i+1}y_i=\frac{q}{(q-1)^2}).
\end{eqnarray*}
Now it is clear that $x_i\mapsto\frac{v}{v^2-1}x,
y_i\mapsto\frac{v}{v^2-1}x^{-1}$ defines an algebra isomorphism from
$A/I$ to $\mathbb{Q}(v)[x,x^{-1}]$.
\end{proof}

\section{General case}
\label{S:arbitrary-dimension}

The general case can be treated similarly.
Here we only give the final result and the key point of the proof.

\begin{Thm} Let $d$ be an integer, and $d'=d-1$. Let $\mathcal{D}$ be the algebraic
  triangulated category classically generated by a $d$-spherical
  object, and let $\mathcal{H}(\mathcal{D})$ be the Hall algebra of
  $\mathcal{D}$ over $\mathbb{Q}$.
\bigskip

(i) When $d\geq 3$ (i.e. $d'\geq 2$), the algebra $\mathcal{H}(\mathcal{D})$ is generated by $z_i$,
  $i\in\mathbb{Z}$, subject to the following relations:
\begin{eqnarray}
&z_i^2 z_{i-d'} - (q+1)q^{(-1)^{d}}z_i z_{i-d'} z_i +
q^{1+2(-1)^{d}}z_{i-d'}z_i^2&\\
&z_i z_{i-d'}^2 - (q+1)q^{(-1)^{d}}z_{i-d'} z_i z_{i-d'} + q^{1+2(-1)^{d}} z_{i-d'}^2 z_i&\\
&z_i z_{i+1} - q^{-1} z_{i+1} z_i - \frac{1}{q-1}&\\
&z_i z_j - q^{(-1)^{j-i}(1+(-1)^{-d})}z_j z_i \qquad\text{ if } i-j\leq -d &\\
&z_i z_j - q^{(-1)^{j-i}} z_j z_i \qquad\text{ if } -d'<i-j<-1.&
\end{eqnarray}
\bigskip

(ii) When $d=2$ (i.e. $d'=1$), the algebra $\mathcal{H}(\mathcal{D})$ is generated by $z_i$,
  $i\in\mathbb{Z}$, subject to the following relations:
\begin{eqnarray}
&z_i^2 z_{i-1} - (q+1)qz_i z_{i-1} z_i +
q^{3}z_{i-1}z_i^2-q(q+1)z_i&\label{z1}\\
&z_i z_{i-1}^2 - (q+1)qz_{i-1} z_i z_{i-1} + q^{3} z_{i-1}^2 z_i-q(q+1)z_{i-1}&\\
&z_i z_j - q^{2(-1)^{j-i}}z_j z_i \qquad\text{ if } i-j\leq -2. &
\end{eqnarray}
\bigskip

(iii) When $d=1$ (i.e. $d'=0$), the algebra $\mathcal{H}(\mathcal{D})$ is generated by $z_{i,j}$,
  $i\in\mathbb{Z}$, $j\in\mathbb{N}$, subject to the following relations:
\begin{eqnarray}
  &z_{i, j}z_{i', j'}-z_{i', j'}z_{i, j},& \text{if}\ i-i'\neq \pm 1\\
 &z_{i, j}z_{i+1, j'}-\sum_{0\leq l\leq
 \mathrm{min}\{j, j'\}}F_{j, j'}^{l}z_{i+1, j'-l}z_{i, j-l}&
\end{eqnarray}
where
\[F_{j, j'}^{l}=\begin{cases} 1, & \text{ if } l=0\\
                           \frac{q-1}{q^{l+1}}, & \text{ if } 0<l<\mathrm{min}\{j,j'\}\\
                           q^{-j'}, & \text{ if } l=j'<j\\
                           q^{-j}, & \text{ if } l=j<j'\\
                           \frac{1}{q^{j-1}(q-1)}, & \text{ if } l=j=j'.
             \end{cases}
\]

\bigskip

(iv) When $d=0$ (i.e. $d'=-1$), the algebra $\mathcal{H}(\mathcal{D})$ is generated by $z_i$, $z'_i$, $i\in\mathbb{Z}$, subject to commutative relations, that is, $\mathcal{H}(\mathcal{D})$ is isomorphic to the polynomial algebra $\mathbb{Q}[z_i,z'_i|i\in\mathbb{Z}]$.
\bigskip

(v) When $d\leq -1$ (i.e. $d'\leq -2$),  the algebra $\mathcal{H}(\mathcal{D})$ is generated by $z_i$,
  $i\in\mathbb{Z}$, subject to the following relations:
\begin{eqnarray}
&z_i^2 z_{i-d'} - (q+1)q^{-1-(-1)^{-d}}z_i z_{i-d'} z_i +
q^{-1-2(-1)^{-d}}z_{i-d'}z_i^2&\\
&z_i z_{i-d'}^2 - (q+1)q^{-1-(-1)^{-d}}z_{i-d'} z_i z_{i-d'} + q^{-1-2(-1)^{-d}} z_{i-d'}^2 z_i&\\
&z_i z_{i+1} - q^{-1} z_{i+1} z_i - \frac{1}{q^{(-1)^{-d}}(q-1)}&\\
&z_i z_j - q^{(-1)^{j-i}(1+(-1)^{-d})}z_j z_i \qquad\text{ if } i-j< d'&\\
&z_i z_j - q^{(-1)^{j-i}} z_j z_i \qquad\text{ if } d\leq i-j<-1.&
\end{eqnarray}
\end{Thm}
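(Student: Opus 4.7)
The plan is to follow, in each of the five cases of the theorem, the three-step strategy employed in Section~\ref{S:cluster-variety} for the case $d=3$. As a preliminary, Theorem~\ref{T:triangulated} identifies $\mathcal{D}$ with $\per(B)$, where $B=k\langle s\rangle/(s^2)$ has $s$ in degree $d$; Koszul duality (for $d\neq 0,1$) then identifies this category with $\cd_{fd}(A)$, where $A$ is the free dg algebra on a closed generator of degree $1-d$, while the exceptional cases use $A=k\times k$ (for $d=0$) and $A=k[t]$ placed in degree zero (for $d=1$). In every case the graded cohomology $H^*A$ is graded hereditary, so Theorem~\ref{T:classificationclassical} supplies the classification of indecomposables: the shifts $\Sigma^p S$, together with suitable analogues of the objects $\Gamma/(t^n\Gamma)[p]$ of Theorem~\ref{T:classification}. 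One reads off from this classification precisely which generators should appear in the presentation in each case of the theorem.

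The first main step is surjectivity of the tentative presentation. Following the inductive argument in the proposition preceding Corollary~5.3, one picks a nonzero object $M$, isolates its top homology component via a distinguished triangle
\[
\tau_{\leq -1}L\oplus N\longrightarrow M\longrightarrow \tau_{\geq 0}L\longrightarrow \Sigma(\tau_{\leq -1}L\oplus N),
\]
and compares the number of indecomposable summands of $M$ with those of a generic extension. A double induction on total homology dimension and on the number of summands reduces the generation problem to the explicit formula expressing $[S^{\oplus m}]$ in terms of $[S]^m$, together with (for case~(iii)) the analogous formula for $[\Gamma/(t^n\Gamma)^{\oplus m}]$ and (for case~(iv)) the product formula for the two simple summands of the $0$-spherical object. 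The second step is to verify each listed relation by a direct computation of the Hall numbers $F_{XY}^Z$. For $d\neq 1$ the relevant $\mathrm{Hom}$ and $\mathrm{Ext}$ spaces between shifts of $S$ are one-dimensional in a predictable finite set of degrees, and the prefactor $q^{(-1)^d}$, the Ore constant $\tfrac{1}{q-1}$, and the extra linear term $-q(q+1)z_i$ that appears in~(\ref{z1}) are accounted for by the Euler factor $\prod_{i>0}|\mathrm{Hom}(Y,\Sigma^{-i}Z)|^{(-1)^i}$ occurring in the definition of $F_{XY}^Z$.

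The last step is injectivity, which I would obtain by a PBW-type decomposition analogous to Lemmas~\ref{L:isomhalves} and~\ref{L:basis}. The commutation and Ore relations among the $z_i$ allow one to rewrite every monomial as a product of a ``positive'' half (the shifts by indices lying in one arithmetic progression of period $|d'|$) times a ``negative'' half; each half should embed, via an analogue of the functor $F^*$ of the proof of Lemma~\ref{L:isomhalves}, into the Hall algebra of a directed quiver of type $\vec{A}_\infty$, which is presented by the quantum Serre relations. Extending the partial order $\leq_\Delta$ to a total order as in Lemma~\ref{L:basis} and arguing that any nontrivial linear dependence has a leading pair $(M,N)$ with $[M\oplus N]$ maximal then forces all coefficients to vanish.

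The main obstacle I expect is case~(iii), $d=1$: there the indecomposables form a genuine two-parameter family $\Gamma/(t^n\Gamma)[p]$, extensions between such objects are no longer concentrated in a single degree, and the ``half'' Hall algebras are already nontrivial, being close to the Hall algebra of the Jordan quiver. The coefficients $F^l_{j,j'}$ in the statement must then be extracted from explicit filtration data in the category of nilpotent $k[t]$-modules, and both the surjectivity induction and the PBW argument need to be refined to track nilpotency class and dimension simultaneously. The remaining cases (ii) and (v) are technically similar to (i) but require keeping careful track of the sign $(-1)^d$ and, in (ii), of the self-extension of $S$ that produces the additional linear correction.
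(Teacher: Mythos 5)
Your overall plan---replicating the three-step strategy of the $d=3$ case for each value of $d$---would likely succeed for cases (i), (iv) and (v), but the paper actually treats cases (iii) and (iv) by a reduction that your proposal misses and that removes most of the work. For $d=1$, the paper does not run the surjectivity/verification/PBW machinery at all: it observes that $\cd$ is triangle equivalent to the bounded derived category of the hereditary abelian category of finite-dimensional nilpotent representations of the Jordan quiver, and then invokes To\"en's Proposition 7.1 (relating the derived Hall algebra of $\cd^b(\ca)$ for a hereditary abelian $\ca$ to the classical Hall algebra of $\ca$) together with Macdonald's classical computation. This turns what you correctly flag as the hardest case into a reduction to known results; you notice the Jordan-quiver connection but treat it as a complication rather than the route, and the refinement you sketch (``track nilpotency class and dimension simultaneously'') would be a lot of unnecessary work. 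Case (iv) is similarly dispatched by noting $\cd\simeq\cd^b$ of a semisimple abelian category with two simples; you essentially have this.

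There is a genuine gap in your treatment of case (ii), $d=2$. There the cubic relations carry a lower-degree correction $-q(q+1)z_i$, so the relations are no longer homogeneous for the natural grading by number of simple constituents, and your injectivity argument---extending $\leq_\Delta$ to a total order and extracting a leading pair $(M,N)$ with $[M\oplus N]$ maximal---does not apply as stated: a linear dependence could involve terms of different degrees and the ``leading term'' bookkeeping breaks down. Flagging ``the self-extension of $S$'' is a correct diagnosis but not a cure. The paper's resolution is to pass to the associated graded: both $A$ and $\ch(\cd)$ are filtered, the comparison map is a morphism of filtered algebras, and one shows the induced map on associated graded algebras is an isomorphism (to which the $d=3$-style PBW argument does apply, since the top-degree parts of the relations are exactly the $d\geq 3$ relations). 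That filtered-algebra step is the missing idea in case (ii).
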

\begin{proof} Let $S$ be the $d$-spherical object, and $\Sigma$ be the suspension functor.

(i) and (v): similar to Theorem~\ref{T:structure}, with $z_i$ representing $\Sigma^{-i} S$.

(ii) Notice that both the Hall algebra $\mathcal{H}(\mathcal{D})$ and the desired algebra are filtered, and the algebra homomorphism from the desired algebra to the Hall algebra $\mathcal{H}(\mathcal{D})$ is a morphism of filtered algebras, and the associated graded algebra homomorphism is an isomorphism, which has a similar proof to that for Theorem~\ref{T:structure}, with $z_i$ representing for $\Sigma^{-i} S$.

(iii) In this case, the triangulated category $\mathcal{D}$ is
equivalent to the bounded derived category of the hereditary abelian
category of finite dimensional re\-pre\-sentations over the Jordan quiver.
Then the desired result follows from~\cite[Proposition 7.1]{Toen06}
and the classical result on the Hall algebra of the above hereditary
abelian category (\cf for example~\cite{Macdonald95}), with $z_{i,j}$ representing $\Sigma^{-i} M_j$, where $M_j$ is the indecomposable nilpotent representation of the Jordan quiver of dimension $j$.

(iv) In this case, the triangulated category $\mathcal{D}$ is equivalent to the bounded derived category of the semisimple abelian category with two simple objects $T$ and $T'$. In the statement, $z_i$ represents $\Sigma^{-i} T$, and $z'_i$ represents $\Sigma^{-i} T'$.
\end{proof}



\def\cprime{$'$}
\providecommand{\bysame}{\leavevmode\hbox to3em{\hrulefill}\thinspace}
\providecommand{\MR}{\relax\ifhmode\unskip\space\fi MR }
\providecommand{\MRhref}[2]{%
  \href{http://www.ams.org/mathscinet-getitem?mr=#1}{#2}
}
\providecommand{\href}[2]{#2}

\end{document}